\newtheorem{thm}{Theorem}[section]
\newtheorem{cond}[thm]{Condition}
\newtheorem{lem}[thm]{Lemma}
\newtheorem{prop}[thm]{Proposition}
\newtheorem{remark}[thm]{Remark}
\definecolor{halfgray}{gray}{0.55}
\definecolor{webgreen}{rgb}{0,.5,0}
\definecolor{webbrown}{rgb}{.6,0,0}
\definecolor{Maroon}{cmyk}{0, 0.87, 0.68, 0.32}
\definecolor{royalblue}{cmyk}{1, 0.50, 0, 0}
\definecolor{Black}{cmyk}{0, 0, 0, 0}
\numberwithin{equation}{section}
\newcommand{\sss}{\scriptscriptstyle}
\renewcommand{\P}{\mathbb{P}}
\newcommand{\indic}{\mathbbm{1}}
\newcommand{\indi}{I}
\newcommand{\Acal}{\mathcal{A}}
\newcommand{\Ncal}{\mathcal{N}}
\newcommand{\Ecal}{\mathscr{E}}
\newcommand{\Lscr}{\mathscr{L}}
\newcommand{\Cscr}{\mathscr{C}}
\newcommand{\Nscr}{\mathscr{N}}
\newcommand{\Dcal}{\mathcal{D}}
\newcommand{\Cmax}{\mathscr{C}_{\rm max}}
\newcommand{\cluster}{\mathscr{C}}
\newcommand{\Poi}{{\sf Poi}}
\newcommand{\E}{\mathbb{E}}
\newcommand{\e}{\mathrm{e}}
\newcommand{\CMd}{\mathrm{CM}_n(\boldsymbol{d})}
\renewcommand{\P}{\mathbb{P}}
\newcommand{\prob}{\mathbb{P}}
\newcommand{\op}{o_{\sss \prob}}
\newcommand{\Op}{O_{\sss \prob}}
\newcommand{\whp}{whp{}}
\newcommand{\convd}{\overset{d}{\rightarrow}}
\newcommand{\convp}{\overset{\sss\prob}{\rightarrow}}
\newcommand{\eqn}[1]{\begin{equation}#1\end{equation}}
\newcommand{\eqan}[1]{\begin{align}#1\end{align}}
\newcommand{\nn}{\nonumber}
\newcommand{\ddsum}{\sideset{_{}^{}}{_{}^{*}}\sum}
\begin{document}

\title[Critical window for connectivity in the Configuration Model]{Critical window for connectivity in the Configuration Model}

\author{Lorenzo Federico}

\author{Remco van der Hofstad}

\email{l.federico@tue.nl, r.w.v.d.hofstad@tue.nl}
\address{Department of Mathematics and Computer Science, Eindhoven University of Technology, PO Box 513, 5600 MB Eindhoven, The Netherlands}
\begin{abstract}
We identify the asymptotic probability of a configuration model $\CMd$ to produce a {\em connected} graph within its critical window for connectivity that is identified by the number of vertices of degree 1 and 2, as well as the expected degree. In this window, the probability that the graph is connected converges to a non-trivial value, and the size of the complement of the giant component weakly converges to a finite random variable. Under a finite second moment condition we also derive the asymptotics of the connectivity probability conditioned on simplicity, from which the asymptotic number of simple connected graphs with a prescribed degree sequence follows. 
\medskip	

\noindent{\sc Keywords:} \textit{configuration model, connectivity threshold, degree sequence}\\
\textit{MSC 2010: 05A16, 05C40, 60C05} 
\end{abstract}
\date{\today}
\maketitle

\section{Introduction}

In this paper we consider the configuration model $\CMd$ on $n$ vertices with a prescribed degree sequence $\boldsymbol d = (d_1, d_2,...,d_n)$. We identify the probability that $\CMd$ is connected in terms of $\boldsymbol d$ in the limit as $n \to \infty$. We further analyse the behaviour of the model in the critical window for connectivity (i.e., when the asymptotic probability of producing a connected graph is in the interval $(0,1)$). Given a vertex $v\in[n]:=\{1,2,...,n\},$ we call $d_v$ its degree. The configuration model is constructed by assigning $d_v$ half-edges to each vertex $v$, after which the half-edges are paired randomly: first we pick two half-edges at random and create an edge out of them, then we pick two half-edges at random from the set of remaining half-edges and pair them into an edge, etc. We assume the total degree $\sum_{v\in [n]} d_v$ to be even. The construction can give rise to self-loops and multiple edges between vertices, but these imperfections are relatively rare when $n$ is large; see \cite{Hofs17, Jan09, Jan14}.

We define the random variable $D_n$ as the degree of a vertex chosen uniformly at random from the vertex set $[n]$. We call $\Nscr_i$ the set of all vertices of degree $i$ and $n_i$ its cardinality. 
The configuration model is known to have a phase transition for the existence of a giant component with critical point at
 	\[ 
	\nu_n =\frac{\E [D_n(D_n-1)]}{\E[D_n]}=1 
	\] 
 (see e.g., \cite{MolRee95} or \cite{JanLuc09}). When $\nu_n\rightarrow \nu>1$, there is a (unique) giant component $\Cmax$ containing a positive proportion of the vertices, while when $\nu_n\rightarrow \nu\leq 1$, the maximal connected component contains a vanishing proportion of the vertices. Assuming that the second moment of $D_n$ remains uniformly bounded, the subcritical behaviour was analysed by Janson in \cite{Jan08}. 
 
In this paper, we focus on the {\em connectivity transition} of the configuration model. Let us first describe the history of this problem. Wormald \cite{Wor81} showed that for $k \geq 3$ a random $k$-regular graph on $n$ vertices is with high probability $k$-connected as $n \to \infty$ (see also \cite{Boll01}). Tomasz \L uczak \cite{Luc92} proved that also if the graph is not regular, but $d_v \geq k$ for every $v \in [n]$, then $\CMd$ in with high probability $k$-connected, and found the asymptotic probability to have a connected graph when $d_v \geq 2$ and the graph is simple. Actually \L uczak's model was defined in a different way from the configuration model and does not allow for vertices of degree $1$, but the results could easily be adapted to the configuration model. 

In this paper we extend \L uczak's results to all possible kinds of weakly converging degree distributions, allowing vertices of degree 1 and removing any condition on maximum degree or finiteness of moments of the degree distribution. This implies that our results include heavy-tailed degree distribution that has received considerable attention in research on the configuration model recently. Moreover, using the multivariate method of moments, we identify the limiting distribution of the size of the {\em complement} of the maximal connected component $[n] \setminus \Cmax$ as a mixture of Poisson variables.

We start by introducing some notation.
\medskip

\paragraph{\bf Notation.}
All limits in this paper are taken as $n$ tends to infinity unless stated otherwise.
A sequence of events $(\mathcal{A}_n)_{n \geq 1}$ happens \emph{with high probability (\whp)} if $\P(\mathcal{A}_n) \to 1$.
For random variables $(X_n)_{n \geq 1}, X$, we write $X_n \convd X$ and $X_n \convp X$ to denote convergence in distribution and in probability, respectively.
For real-valued sequences $(a_n)_{n \geq 1}$, $(b_n)_{n \geq 1}$, we write $a_n=O(b_n)$ if the sequence $(a_n/b_n)_{n \geq 1}$ is bounded; $a_n=o(b_n)$ if $a_n/b_n \to 0$; $a_n =\Theta(b_n)$ if the sequences $(a_n/b_n)_{n \geq 1}$ and $(b_n/a_n)_{n \geq 1}$ are both bounded; and $a_n \sim b_n$ if $a_n/b_n \to 1$.
Similarly, for sequences $(X_n)_{n \geq 1}$, $(Y_n)_{n \geq 1}$ of random variables, we write $X_n=\Op(Y_n)$ if the sequence $(X_n/Y_n)_{n \geq 1}$ is tight; and $X_n=\op(Y_n)$ if $X_n/ Y_n \convp 0$. Moreover, $\Poi(\lambda)$ always denotes a Poisson distributed random variable with mean $\lambda$ and ${\sf Bin} (n,p)$ denotes a random variable with binomial distribution with parameters $n$ and $p$.

\section{Main Results}
We start by defining the conditions for $\CMd$ to be in the connectivity critical window. We define the random variable $D_n$ as the degree of a vertex chosen uniformly at random in $[n]$. We state the conditions we assume to hold throughout this paper.

\begin{cond}[Critical window for connectivity]
\label{cond-crit-conn}
We define a sequence $\CMd$ to be in the \emph{critical window for connectivity} when the following conditions are satisfied:
\begin{enumerate}
\item There exists a limiting degree variable $D$ such that $D_n \overset{d}{\to} D$;
\item $n_0 =0$;
\item $\lim_{n \to \infty} n_1/\sqrt{n}= \rho_1 \in [0 , \infty)$;
\item $\lim_{n \to \infty} n_2/n = p_2 \in [0,1)$;
\item $\lim_{n \to \infty} \E [D_n]= d < \infty$.
\end{enumerate}
\end{cond}
\medskip

Notice that we use different symbols for $\rho_1$ and $p_2$ to stress the fact that $p_2$ is actually the limit probability that a uniformly chosen vertex has degree $2$ while $\rho_1$ is obtained through a rescaling (in the critical window for connectivity $p_1 = \lim n_1/n =0$). Under these conditions, we prove our main theorem. In its statement, we write $\Cmax$ for the maximal connected component in $\CMd$.

\begin{thm}[Connectivity threshold for the configuration model]\label{main}
Consider $\CMd$ in the critical window for connectivity as described in Condition \ref{cond-crit-conn}. Then
	\begin{equation}\label{conn}
	\lim_{n \to \infty} \P (\CMd \text {\emph{ is connected}}) =\left(\dfrac{d-2p_2}{d}\right)^{1/2} \exp \left( - \dfrac{\rho_1 ^2 }{2(d -2p_2)} \right).
	\end{equation}
Moreover,
	\begin{equation}\label{dist}
	n-|\Cmax | \overset{d}{\to} X,
	\end{equation} 
where $X = \sum_k k(C_k + L_k)$, and $(C_k , L_k)_{k\geq 1}$ are independent random variables such that
	\begin{equation*}
	L_k \overset{d}{=} \Poi \left( \dfrac{\rho_1^{2} (2p_2)^{k-2}}{2d^{k-1}}\right), \quad C_k  \overset{d}{=} 
	\Poi \left( \dfrac{(2p_2)^{k}}{2k d^{k}}\right).
	\end{equation*}
Finally,\footnote{In the published version of the article, equation \eqref{expected} had a wrong value for the expected number of vertices in line components in the first version, as a consequence of a computational mistake in \eqref{eq-exp}, which has now been corrected.}
	\begin{equation}\label{expected}
	\lim_{n \to \infty} \E [n - |\Cmax|] = \dfrac{\rho_1^2}{d-p_2}+\dfrac{p_2}{d-2p_2}.
	\end{equation}
\end{thm}
\medskip

The convergence in distribution of $n-|\Cmax |$ to a proper random variable with finite mean is a stronger result than proved by \L uczak \cite{Luc92}, who instead proved that
	\begin{equation}
	\dfrac{|\Cmax|}{n}\convp 1.
	\end{equation}
Our improvement is achieved by an application of the multivariate method of moments, as well as a careful estimate of the probability that there exists $v \in [n]$ with $d_v\geq 3$ that is not part of $|\Cmax|$. We next investigate the boundary cases.

\begin{remark}[Boundary cases]
\label{rem-bc}
Our proof also applies to the boundary cases where $\rho_1 = \infty, p_2=0$ or $d=\infty$. When $d<\infty$, we obtain
	\begin{equation}
	\label{bc-1}
	\P(\CMd \text {\emph{ is connected}}) \to  \begin{cases} 
	0 & \text{ when } \rho_1 = \infty,\\
	1 & \text{ when } \rho_1, p_2 = 0.
	\end{cases}
	\end{equation}
When $d = \infty$, instead
	\begin{equation}
	\label{bc-2}
	\lim_{n \to \infty} \P (\CMd \text{ is connected})= \lim_{n \to \infty} \exp\Big(-\frac{n_1^2}{2 \ell_n}\Big),
	\end{equation}
	where $\ell_n = \sum_{i \in [n]} d_i$ denotes the total degree.
\end{remark}
\medskip

Assuming also that $D$ has finite second moment the configuration model is simple with non-vanishing probability.
Under this extra assumption, the next theorem states how many connected graphs there are with prescribed degrees in the connectivity window defined in Condition \ref{cond-crit-conn}.  

\begin{thm}[Connectivity conditioned on simplicity and number of connected simple graphs]\label{mainsim}
Consider $\CMd$ in the connectivity critical window defined in Condition \ref{cond-crit-conn}.  If 
	\[
	\lim_{n \to \infty}\dfrac{\E [D_n(D_n-1)]}{\E[D_n]}= \nu < \infty,
	\]
then
	\begin{equation}
	\label{conns}\begin{split}
	&\lim_{n \to \infty}\P(\CMd \text {\emph{ is connected}}
	\mid\CMd \text {\emph{ is simple}}) \\
	&\qquad\qquad=\left(\dfrac{d-2p_2}{d}\right)^{1/2}\exp{\left(-\dfrac{\rho_1^2}{2(d-2p_2)} +\dfrac{p_2^2+d p_2}{d^2} \right)}.
	\end{split}\end{equation}

Let $\mathscr{N}^C_n(\boldsymbol{d})$ be the number of connected simple graphs with degree distribution $\boldsymbol{d}$. Then
	\begin{equation}
	\begin{split}
	\mathscr{N}^C_n(\boldsymbol{d}) = & \dfrac{( \ell_n-1)!!}{\prod_{i \in [n]}d_i !}\left(\dfrac{d-2p_2}{d}\right)^{1/2} \\
	&\quad \times\exp\left( -\dfrac{\nu}{2} -\dfrac{\nu^2}{4}-\dfrac{\rho_1^2}{2(d-2p_2)} 
	+\dfrac{p_2^2+d p_2}{d^2}\right) (1+o(1)).
	\end{split}
	\end{equation}

\end{thm}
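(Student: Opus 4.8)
The plan is to combine Theorem \ref{main} with the classical asymptotics for the probability that $\CMd$ is simple, together with the fact that conditioning on simplicity asymptotically decouples from conditioning on connectivity in the critical window. First I would recall that under the finite-second-moment condition $\nu_n \to \nu < \infty$, a theorem of Janson (and of Bollob\'as) gives
\begin{equation*}
\P(\CMd \text{ is simple}) \to \exp\left\{-\frac{\nu}{2}-\frac{\nu^2}{4}\right\}.
\end{equation*}
The heuristic for \eqref{conns} is that self-loops and multiple edges are, asymptotically, carried by vertices of high degree, while disconnectedness in the critical window is caused by small components built out of vertices of degree $1$ and $2$; these two families of ``defects'' are asymptotically independent, so one expects
\begin{equation*}
\P(\text{connected}\mid\text{simple}) = \frac{\P(\text{connected and simple})}{\P(\text{simple})}
\end{equation*}
to factor into the Theorem \ref{main} limit times a correction coming from the (small) chance that a degree-$1$ or degree-$2$ component is destroyed when we forbid loops and multiple edges. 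The correction term $\exp\{(p_2^2+dp_2)/d^2\}$ should be the ratio between the simplicity probability of the whole graph and that of the graph after removing the relevant small structures; making this precise is the technical heart of the argument.

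The cleanest way to carry this out rigorously is to reuse the method of moments machinery already developed for Theorem \ref{main}, but now working under the law $\P(\,\cdot\mid \CMd\text{ is simple})$. Concretely, I would show that the joint factorial moments of the collection of ``small-component counts'' $(C_k, L_k)$ — or more precisely of the counts of the finitely many component shapes on vertices of degree $1$ and $2$ that can detach — converge under the conditional measure to those of independent Poisson variables with \emph{modified} parameters. The modification comes from a local computation: given a candidate small component on $O(1)$ half-edges, the conditional probability (given simplicity of the rest) that it is present and that the whole graph is simple differs from the unconditional one by an explicit constant factor, computable because the relevant half-edges interact with only $O(1)$ other half-edges. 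Summing the resulting Poisson parameters and tracking how the exponent changes yields the extra term $\exp\{(p_2^2+dp_2)/d^2\}$ multiplying the expression in \eqref{conn}, together with the disappearance — upon conditioning — of nothing in the $\rho_1$ part (since degree-$1$ structures cannot create loops or multi-edges among themselves at leading order) while the degree-$2$ part acquires the shift. I would organise this as: (i) state the conditional-simplicity asymptotics as a lemma; (ii) prove the Poisson-with-shifted-parameters limit via conditional factorial moments, using Janson's contiguity-type estimates to control the conditioning; (iii) resum to obtain \eqref{conns}.

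For the counting formula, the remaining step is purely bookkeeping. By definition of the configuration model as a uniform pairing of half-edges, the number of pairings producing any fixed simple graph $G$ with degree sequence $\boldsymbol d$ equals $\prod_{i\in[n]} d_i!$, and the total number of pairings is $(\ell_n-1)!!$; hence
\begin{equation*}
\mathscr{N}^C_n(\boldsymbol d) = \frac{(\ell_n-1)!!}{\prod_{i\in[n]}d_i!}\,\P(\CMd \text{ is simple and connected}),
\end{equation*}
and $\P(\CMd\text{ is simple and connected}) = \P(\text{connected}\mid\text{simple})\,\P(\text{simple})$ combines \eqref{conns} with Janson's simplicity asymptotics, giving exactly the displayed expression with the exponent $-\tfrac{\nu}{2}-\tfrac{\nu^2}{4}-\tfrac{\rho_1^2}{2(d-2p_2)}+\tfrac{p_2^2+dp_2}{d^2}$ and the prefactor $\big((d-2p_2)/d\big)^{1/2}$. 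The main obstacle, as indicated, is step (ii): justifying that the Poisson limit survives the conditioning on simplicity with explicitly shifted means. This requires care because simplicity is a global event; the resolution is that, in the critical window, the high-degree vertices responsible for non-simplicity are asymptotically independent of the low-degree (degree $1$ and $2$) vertices responsible for disconnectedness, a fact that can be extracted from Janson's explicit formula for $\P(\text{simple})$ applied to the original and the ``reduced'' degree sequences, or alternatively proved directly by a switching argument on the $O(1)$ half-edges involved.
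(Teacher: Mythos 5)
Your overall architecture coincides with the paper's: the counting formula is obtained exactly as you say, from $\mathscr{N}^C_n(\boldsymbol d)=\mathscr{N}_n(\boldsymbol d)\,\P(\CMd \text{ connected}\mid\CMd\text{ simple})$ together with the classical asymptotics $\mathscr{N}_n(\boldsymbol d)=\e^{-\nu/2-\nu^2/4}\,(\ell_n-1)!!/\prod_i d_i!\,(1+o(1))$, so the substance is the conditional connectivity probability. But your step (ii) --- which you yourself flag as the main obstacle --- rests on a picture that is not correct and would send you down the wrong computation. You propose that under $\P(\cdot\mid\text{simple})$ the counts $(C_k,L_k)$ become independent Poissons with \emph{modified} parameters, the modification coming from a local computation of how a candidate component's presence interacts with simplicity. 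In fact no parameter is modified: for every cycle length $k\ge 3$ and line length $k\ge 2$ the limiting Poisson parameter is unchanged by the conditioning, because removing the $O(1)$ vertices of a candidate component leaves $\nu'=\nu$ and hence leaves the simplicity probability of the remainder asymptotically untouched. The entire correction $\exp\{(p_2^2+dp_2)/d^2\}$ has a more elementary source that you only gesture at: a cycle of length $1$ is a self-loop at a degree-$2$ vertex and a cycle of length $2$ is a double edge between two degree-$2$ vertices, so $\{\CMd\text{ simple}\}$ forces $C_1(n)=C_2(n)=0$ deterministically. Conditioning on simplicity therefore simply deletes the factors $\P(C_1=0)\P(C_2=0)$ from the product in the proof of Theorem \ref{main}, and
\[
\exp\{\E[C_1]+\E[C_2]\}=\exp\Big\{\frac{2p_2}{2d}+\frac{(2p_2)^2}{4d^2}\Big\}=\exp\Big\{\frac{dp_2+p_2^2}{d^2}\Big\}
\]
is exactly the stated correction; the $\rho_1$-part is untouched because lines are always simple.

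What then remains is to prove that $((L_k(n))_{k\ge2},(C_k(n))_{k\ge3})$ is asymptotically independent of the event $\{S(n)=M(n)=0\}$, with $S(n),M(n)$ the total numbers of self-loops and multiple edges. The paper establishes this (Theorem \ref{poism}) by running the same factorial-moment computation as for Theorem \ref{poicl}, but \emph{unconditionally} on the augmented vector $((L_k)_{k\ge2},(C_k)_{k\ge3},S(n),M(n))$: given a prescribed family of lines and long cycles, $(S(n),M(n))$ is the self-loop/multi-edge count of a configuration model on the reduced degree sequence, whose limiting $\nu'$ equals $\nu$ since only finitely many vertices are removed. This sidesteps working under the conditional measure and makes your proposed switching argument unnecessary. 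So the fix is twofold: replace ``shifted parameters'' by the observation that simplicity kills $C_1$ and $C_2$ while leaving every other parameter intact, and obtain the required independence by augmenting the moment computation with $S(n)$ and $M(n)$ rather than by conditioning.
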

\medskip

It is instructive to compare the asymptotics of the number of connected simple graphs with degree distribution $\boldsymbol{d}$ to the number $\mathscr{N}_n(\boldsymbol{d})$ of simple graphs with a given degree sequence $\boldsymbol{d}$, as identified by Janson \cite{Jan09},
\begin{equation}
\mathscr{N}_n(\boldsymbol{d})= \dfrac{( \ell_n-1)!!}{\prod_{i \in [n]}d_i !}\exp\left( -\dfrac{\nu}{2} -\dfrac{\nu^2}{4}\right) (1+o(1)).
\end{equation}

With these results, the connectivity critical window is fully explored. Indeed, we have determined the asymptotic probability for the configuration model to produce a connected graph for all possible choices of the limiting degree distribution. What remains is to find the asymptotic of the number of connected simple graphs with degree distribution $\boldsymbol d$ when it is below the connectivity critical window (i.e., when $n_1 \gg n^{1/2}$). In this case we should analyse how fast the probability to produce a connected graph vanishes, which is a hard problem.

To address the last boundary case, not considered in Remark \ref{rem-bc}, it is also worth noticing that the size of the largest component is very sensitive to the precise way how $n_2/n \to 1$
(recall that we assume that $p_2<1$ in Condition \ref{cond-crit-conn}), as we describe now. 
We define $\cluster (v)$ as the connected component of a uniformly chosen vertex. When $n_2=n$, it is not hard to see that
	\begin{equation}
	\dfrac{|\Cmax|}{n} \overset{d}{\to} S; \qquad\quad \dfrac{|\cluster(v)|}{n} \overset{d}{\to} T,
	\end{equation}
where $S,T$ are proper random variables that satisfy the relation $S  \overset{d}{=} T \vee [(1-T)S]$.
Instead, $|\Cmax|/n\convp 0$ when $n_2 = n - n_1$, with $n_1 \to \infty$, while $|\Cmax|/n\convp 1$ when $n_2 = n - n_4$, with $n_4 \to \infty$. The latter two statements can be proved by relating it to the case where $n_2=n$. Indeed, for the case where $n_1>0$, we take $n_2'=n_2+n_1/2$, and produce $\CMd$ from the configuration model with $n_2$ vertices of degree 2 by `splitting' $n_1/2$ vertices of degree 2 into two vertices of degree 1. For the case where $n_4>0$, we take $n_2'=n_2+2n_4$, and produce $\CMd$ from the configuration model with $n_2$ vertices of degree 2 by `merging' $2n_4$ vertices of degree 2 into a vertex of degree 4. This explains why it is important to us to assume that $p_2<1$ in Condition \ref{cond-crit-conn}.

\subsection{Outline of the proof}

We first notice that in the connectivity critical window our configuration model is supercritical, i.e., \whp \  it has a unique component of linear size with respect to the whole graph. In more detail, for finite $\rho_1 < \infty$ and $ p_2< 1$,
	\begin{equation}
	\lim_{n \to \infty} \nu_n= \lim_{n \to \infty}\dfrac{\E [D_n(D_n-1)]}{\E[D_n]}
	\geq \dfrac{2p_2 + 6 (1-p_2 )}{2 p_2 +3 (1- p_2 )} 
	> 1.
	\end{equation}
Thus the results from \cite{JanLuc09,MolRee98} imply that $|\Cmax|= \Theta_\P (n)$, while the second largest connected component $\cluster_{\sss (2)}$ satisfies $|\cluster_{\sss (2)}| = o_\P(n)$ and $|E(\cluster_{\sss (2)})|=o_\P(n)$, where $|E(G)|$ indicates the number of edges of the graph $G$. The proof of our main theorem is now divided into two parts:

\begin{enumerate}
\item To identify the limit distribution of the number of lines and cycles that form $[n] \setminus \Cmax$, which we do in Section \ref{sec-Pois-conv};

\item To prove that \whp~ all vertices $v \in [n]$ with $d_v \geq 3$ are in the giant component $\Cmax$, which we do in Section \ref{sec-conn-three}.
\end{enumerate}

The proofs of our main theorems are then completed in Section \ref{sec-compl-pf}.

\section{Poisson convergence of the number of lines and cycles}
\label{sec-Pois-conv}
In this section, we prove that the number of cycles (components made by $k$ vertices of degree 2) and lines (components made by 2 vertices of degree 1 and $k-2$ vertices of degree 2) jointly converge to independent Poisson random variables. In Section \ref{sec-conn-three}, we will show that $[n] \setminus \Cmax$ \whp{} only contains vertices of degree 1 and 2, so that all the other components are either cycles or lines.
We define the sequences of random variables $(\mathbf{C}_n, \mathbf{L}_n)= \big((C_k(n),L_k(n))\big)_{k\geq 1}$ as

\begin{itemize}
\item[$\rhd$] $C_k(n)$=  $\#$ \{cycles of length $k$ in $\CMd$\},
\item[$\rhd$] $L_k(n)$= $\#$ \{lines of length $k$ in $\CMd$\}.
\end{itemize}
\medskip

We consider a vertex of degree 2 with a self-loop as a cycle of length 1. By convention, $L_1(n)=0$ for all $n$ since a vertex of degree $1$ can not have a self-loop.

We define $\Cscr_k= \{ \{v_1,v_2,...,v_k\} \subseteq \Nscr_2\}$ to be the set of all collections of $k$ vertices that could form a cycle, and denote
	\begin{equation}
	C_k (n) = \sum_{c \in \Cscr_k} \indic_{\{c \text{ forms a cycle}\}},
	\end{equation}
where $\indic_{A}$ denotes the indicator of the event $A$.
In a similar way we define $\Lscr_k=\{ \{v_1,v_2,...,v_k\}: v_1, v_k \in \Nscr_1; v_2,...,v_{k-1} \in \Nscr_2 \}$ to be the set of all collections of $k$ vertices that could form a line, and denote
	\begin{equation}
	L_k (n) = \sum_{l \in \Lscr_k} \indic_{\{l \text{ forms a line}\}}.
	\end{equation} 

We will use the multivariate method of moments to show that $\big((C_n (k), \ L_k(n))\big)_{ k \geq 1}$ converges to a vector of independent Poisson random variables. For a random variable $X$, we define $(X)_r=X (X -1)\cdots(X -r+1)$. For the multivariate method of moments, we recall two useful lemmas, whose proofs are given in \cite[Section 2.1]{Hofs17}.

\begin{lem}[Multivariate moment method with Poisson limit]
\label{poi} 
A sequence of vectors of non-negative integer-valued random variables ~$(X_1^{\sss(n)}, X_2^{\sss(n)},...,X_k^{\sss(n)} )_{n\geq 1}$ converges in distribution to a vector of independent Poisson random variables
with parameters $(\lambda_1 , \lambda_2,...,\lambda_k)$ when, for all possible choices of $(r_1, r_2,...,r_k) \in \mathbb{N}^{k} $,
	\begin{equation}
	\lim_{n\to \infty} \E [(X_1^{\sss(n)})_{r_1}(X_2^{\sss(n)})_{r_2}\cdots (X_k^{\sss(n)})_{r_k}] 
	= \lambda_1 ^{r_1} \lambda_2^{r_2} \cdots \lambda_k^{r_k}.
	\end{equation}
\end{lem}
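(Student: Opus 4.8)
The statement is the standard multivariate method of moments with Poisson limit, proved in \cite[Section 2.1]{Hofs09}; the plan is to convert the factorial-moment hypothesis into an ordinary-moment statement and then invoke the classical moment problem.

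First I would record the elementary identities relating factorial and ordinary moments. For a non-negative integer-valued random variable $X$ one has $(X)_r=\sum_{j\le r}s(r,j)X^j$ and $X^m=\sum_{r\le m}S(m,r)(X)_r$, finite linear combinations with $s(\cdot,\cdot),S(\cdot,\cdot)$ the signed Stirling numbers of the first and second kind. Taking products over the $k$ coordinates shows that each mixed factorial moment $\E[\prod_i(X_i^{\sss(n)})_{r_i}]$ is a fixed finite linear combination of mixed ordinary moments $\E[\prod_i(X_i^{\sss(n)})^{m_i}]$ with $m_i\le r_i$, and conversely. Hence the hypothesis $\E[\prod_i(X_i^{\sss(n)})_{r_i}]\to\prod_i\lambda_i^{r_i}$ is equivalent to convergence of all mixed ordinary moments to $\prod_i T_{m_i}(\lambda_i)$, where $T_m(\lambda)=\sum_{r\le m}S(m,r)\lambda^r$ is the Touchard polynomial. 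Since $\E[Y^m]=T_m(\lambda)$ for $Y\overset{d}{=}\Poi(\lambda)$ and independent variables have factorizing mixed moments, this limit equals $\E[\prod_i Y_i^{m_i}]$ for independent $Y_i\overset{d}{=}\Poi(\lambda_i)$. So I will have shown that every mixed moment of $(X_1^{\sss(n)},\dots,X_k^{\sss(n)})$ converges to the corresponding mixed moment of the independent Poisson vector $(Y_1,\dots,Y_k)$.

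Next I would check that $(Y_1,\dots,Y_k)$ is determined by its mixed moments. Each $\Poi(\lambda_i)$ satisfies Carleman's condition, because $\E[Y_i^{2m}]=T_{2m}(\lambda_i)\le\max(1,\lambda_i)^{2m}(2m)!$ gives $\E[Y_i^{2m}]^{1/(2m)}=O(m)$, so $\sum_m\E[Y_i^{2m}]^{-1/(2m)}=\infty$. Since every marginal satisfies Carleman's condition, the vector $(Y_1,\dots,Y_k)$ is moment-determinate by the multivariate Carleman criterion. Finally, convergence of the (in particular second) moments makes the sequence tight; along any weakly convergent subsequence the bounds on higher moments give the uniform integrability needed to pass moments to the limit, so the subsequential limit has exactly the mixed moments of $(Y_1,\dots,Y_k)$ and hence equals it in law; as all subsequential limits agree, $(X_1^{\sss(n)},\dots,X_k^{\sss(n)})\convd(Y_1,\dots,Y_k)$.

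I expect the only subtle point to be the determinacy step in the multivariate setting: the one-dimensional Poisson moment problem is classical, but one must ensure the \emph{joint} law is pinned down rather than only the marginals, which is exactly where the multivariate moment problem (and the fact that the Poisson moments grow slowly, at the rate $m!$ rather than $(m!)^2$) comes in. Everything else is routine bookkeeping with Stirling numbers and a standard tightness-plus-uniqueness argument.
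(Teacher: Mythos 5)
Your proof is correct, but it takes a different route from the one the paper relies on (the paper does not prove the lemma itself; it cites \cite[Section 2.1]{Hofs09}). The standard proof there exploits the integer-valuedness directly: one inverts the factorial moments via the identity $\P(X=m)=\sum_{r\geq m}\tfrac{(-1)^{r-m}}{m!\,(r-m)!}\E[(X)_r]$ and uses Bonferroni-type alternating bounds (truncating the sum at even/odd depth gives upper/lower bounds) to deduce convergence of the joint point probabilities to $\prod_i \e^{-\lambda_i}\lambda_i^{m_i}/m_i!$; no appeal to the general moment problem is needed. You instead convert factorial to ordinary moments via Stirling numbers, identify the limits as the mixed moments of an independent Poisson vector, and then invoke the general method of moments: tightness from bounded second moments, uniform integrability from bounded higher moments to pass moments to subsequential limits, and moment-determinacy of the limit. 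Your bound $\E[Y^{2m}]=T_{2m}(\lambda)\leq \max(1,\lambda)^{2m}B_{2m}\leq \max(1,\lambda)^{2m}(2m)!$ does give Carleman's condition for each marginal, and you correctly flag that marginal determinacy must be upgraded to joint determinacy (Petersen's multivariate Carleman criterion), which is the one genuinely non-routine ingredient in your version. The trade-off: the direct factorial-moment inversion is more elementary, stays entirely within integer-valued variables, and yields the point probabilities explicitly; your argument is heavier but is the generic template that works for non-integer-valued limits as well. Both are complete proofs of the lemma.
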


\begin{lem}[Factorial moments of sums of indicators]
\label{ind}When $X_j=\sum_{i \in \mathcal{I}_j}\indi_{i^{(j)}}$ for all $j =1,\ldots,k$,
	\begin{equation}
	\begin{split}
	&\E[(X_1^{\sss(n)})_{r_1}(X_2^{\sss(n)})_{r_2}\cdots(X_k^{\sss(n)})_{r_k}]\\ =&\ddsum_{i_1^{(1)},\ldots,i_{r_1}^{(1)}\in \mathcal{I}_1} 
	\cdots \ddsum_{i_1^{(k)},\ldots,i_{r_k}^{(k)}\in \mathcal{I}_k} \E\Big[\prod_{j=1}^k\prod_{s=1}^{r_k}\indi_{i_s^{(j)}}\Big ],
	\end{split}
	\end{equation}
where $\sum^{\ast}$ denotes a sum over distinct indices. 
\end{lem}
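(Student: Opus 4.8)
The plan is to reduce the multivariate statement to a single-variable combinatorial identity and then take a product. Fix $j$ and write $X_j=\sum_{i\in\mathcal{I}_j}\indi_{i^{(j)}}$. I claim that for every $r\in\mathbb{N}$,
	\[
	(X_j)_r=\sum_{i_1^{(j)},\ldots,i_r^{(j)}\in\mathcal{I}_j}^{\ast}\indi_{i_1^{(j)}}\cdots\indi_{i_r^{(j)}},
	\]
where $\sum^{\ast}$ runs over ordered $r$-tuples of \emph{distinct} indices. The quickest route is a combinatorial interpretation: conditionally on the realisation of the indicators, the set $S_j=\{i\in\mathcal{I}_j:\indi_{i^{(j)}}=1\}$ has cardinality $X_j$, and $(X_j)_r=|S_j|(|S_j|-1)\cdots(|S_j|-r+1)$ is by definition the number of ordered $r$-tuples of distinct elements of $S_j$; this is exactly what the right-hand side counts, since $\indi_{i_1^{(j)}}\cdots\indi_{i_r^{(j)}}=1$ precisely when all of $i_1^{(j)},\ldots,i_r^{(j)}$ lie in $S_j$. (Alternatively, one proves this by induction on $r$, peeling off one index at a time.)

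Next, multiply these identities over $j=1,\ldots,k$ and expand the product of sums:
	\begin{align*}
	\prod_{j=1}^k(X_j)_{r_j}&=\prod_{j=1}^k\Big(\sum_{i_1^{(j)},\ldots,i_{r_j}^{(j)}\in\mathcal{I}_j}^{\ast}\prod_{s=1}^{r_j}\indi_{i_s^{(j)}}\Big)\\
	&=\sum_{i_1^{(1)},\ldots,i_{r_1}^{(1)}\in\mathcal{I}_1}^{\ast}\cdots\sum_{i_1^{(k)},\ldots,i_{r_k}^{(k)}\in\mathcal{I}_k}^{\ast}\ \prod_{j=1}^k\prod_{s=1}^{r_j}\indi_{i_s^{(j)}}.
	\end{align*}
Since a product of indicators is the indicator of the intersection of the corresponding events, $\prod_{j,s}\indi_{i_s^{(j)}}=\indi_{\{\indi_{i_s^{(j)}}=1\ \forall j,s\}}$. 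Taking expectations on both sides, using linearity of expectation to pull $\E$ through the (finite) sums, and $\E[\indi_A]=\P(A)$, yields exactly the claimed formula.

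The only points requiring a word of care are: (i) the single-variable identity above, which is the combinatorial core of the argument; and (ii) the interchange of expectation and summation, which is immediate when the index sets $\mathcal{I}_j$ are finite (as they are in all our applications) and otherwise follows from Tonelli's theorem since all summands are non-negative. Neither presents a genuine obstacle, so the lemma is essentially a bookkeeping statement that packages the falling-factorial moments of counts of indicators into a sum over tuples of distinct indices, which is the form needed to apply Lemma \ref{poi}.
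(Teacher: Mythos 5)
Your proof is correct and is the standard argument: the paper itself does not prove this lemma but defers to \cite[Section 2.1]{Hofs09}, where essentially the same reasoning appears — the pointwise identity $(X_j)_{r_j}=\sum^{\ast}\indi_{i_1^{(j)}}\cdots\indi_{i_{r_j}^{(j)}}$ (counting ordered tuples of distinct indices whose indicators are all $1$), followed by multiplying over $j$, expanding, and taking expectations. No gaps; your remark on Tonelli covers the only point needing care when the index sets are infinite.
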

\medskip

See also \cite[Chapter 6]{JanLucRuc00} for more general versions of the method of moments.
We now can state the main result of this section.

\begin{thm}[Poisson convergence of number of lines and cycles] 
\label{poicl}
Consider $\CMd$ in the critical window for connectivity defined in Condition \ref{cond-crit-conn}. Then
	\begin{equation}\label{conver}
	(\mathbf{C}_n, \mathbf{L}_n)  \xrightarrow{d} (\mathbf{C}, \mathbf{L}),
	\end{equation}
where $(\mathbf{C}, \mathbf{L})=\big((C_k,L_k)\big)_{ k \geq 1}$ is a sequence of independent random variables with
	\begin{equation}
	L_k \overset{d}{=} \Poi \left( \dfrac{\rho_1^{2} (2p_2)^{k-2}}{2d^{k-1}}\right), \quad C_k  \overset{d}{=} 
	\Poi \left( \dfrac{(2p_2)^{k}}{2k d^{k}}\right),
	\end{equation}
	and the convergence in \eqref{conver} is in the product topology on $\mathbb N^\infty$.
\end{thm}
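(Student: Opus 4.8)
The plan is to verify the hypotheses of Lemma~\ref{poi}: it suffices to show that for every finite family of indices and all non-negative integer exponents $(r_k)_k$ (for the cycle counts) and $(s_k)_k$ (for the line counts, with $s_1$ absent since $L_1(n)\equiv0$), the mixed factorial moment $\E\big[\prod_k (C_k(n))_{r_k}(L_k(n))_{s_k}\big]$ converges to $\prod_k \lambda_{C_k}^{r_k}\lambda_{L_k}^{s_k}$, where we abbreviate $\lambda_{C_k}=(2p_2)^k/(2kd^k)$ and $\lambda_{L_k}=\rho_1^2(2p_2)^{k-2}/(2d^{k-1})$. Since $C_k(n)=\sum_{c\in\Cscr_k}\indi_{\{c\text{ forms a cycle}\}}$ and $L_k(n)=\sum_{l\in\Lscr_k}\indi_{\{l\text{ forms a line}\}}$ are sums of indicators, Lemma~\ref{ind} expresses this factorial moment as a sum, over ordered tuples of \emph{distinct} collections of the prescribed types and lengths, of the probability that all the listed collections simultaneously form cycles/lines in $\CMd$.

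Step 1 (single collections). Fix $c\in\Cscr_k$. Since its $k$ vertices all have degree $2$, ``$c$ forms a cycle'' means precisely that the $2k$ half-edges incident to $c$ are paired among themselves into a single $k$-cycle, and the number of perfect matchings of these $2k$ half-edges that do so is $a_k:=(k-1)!\,2^{k-1}$ (for $k\ge3$: one of the $(k-1)!/2$ cyclic orders of the vertices, times, at each of the $k$ vertices, one of $2$ choices of which half-edge points ``clockwise''; for $k=1,2$ a direct count gives $a_1=1$, $a_2=2$). As the pairing is uniform over the $(\ell_n-1)!!$ perfect matchings of the $\ell_n$ half-edges,
\[
\P(c\text{ forms a cycle})=\frac{a_k\,(\ell_n-2k-1)!!}{(\ell_n-1)!!}=\frac{(k-1)!\,2^{k-1}}{(\ell_n-1)(\ell_n-3)\cdots(\ell_n-2k+1)}\sim\frac{(k-1)!\,2^{k-1}}{\ell_n^{\,k}},
\]
using $\ell_n=n\,\E[D_n]\sim dn\to\infty$. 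Similarly, for $l\in\Lscr_k$ with $k\ge2$, ``$l$ forms a line'' means the $2k-2$ incident half-edges are matched into a path whose two ends are the degree-$1$ vertices, which has $b_k:=(k-2)!\,2^{k-2}$ realizing matchings, so $\P(l\text{ forms a line})\sim(k-2)!\,2^{k-2}/\ell_n^{\,k-1}$. Multiplying by the numbers of candidate collections, $|\Cscr_k|=\binom{n_2}{k}\sim n_2^k/k!$ and $|\Lscr_k|=\binom{n_1}{2}\binom{n_2}{k-2}\sim\tfrac12 n_1^2 n_2^{k-2}/(k-2)!$, and inserting $n_2\sim p_2 n$, $n_1^2\sim\rho_1^2 n$, $\ell_n\sim dn$, gives $\E[C_k(n)]\to\lambda_{C_k}$ and $\E[L_k(n)]\to\lambda_{L_k}$.

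Step 2 (higher mixed moments). The key combinatorial point is that in the sum from Lemma~\ref{ind} only tuples of pairwise \emph{disjoint} collections contribute, the joint event being empty as soon as two distinct collections in the tuple share a vertex: a degree-$2$ vertex lying in two distinct would-be cycles, two distinct would-be lines, or a would-be cycle and a would-be line would need \emph{both} of its half-edges to be used inside each of the two prescribed components, forcing these to coincide; a degree-$1$ vertex lying in two distinct would-be lines would need its single half-edge matched inside each, again forcing coincidence. For a tuple of pairwise disjoint collections using a total of $h$ half-edges — where $h$ is a fixed (even) function of the finitely many chosen lengths and exponents, hence bounded as $n\to\infty$ — the probability that all pieces are realized equals $\big(\prod_{\text{pieces}}(\text{number of realizing matchings})\big)\cdot(\ell_n-h-1)!!/(\ell_n-1)!!$, and since $(\ell_n-h-1)!!/(\ell_n-1)!!=\big[(\ell_n-1)(\ell_n-3)\cdots(\ell_n-h+1)\big]^{-1}\sim\ell_n^{-h/2}$, this factorizes to leading order into the product of the single-collection probabilities of Step~1. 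Moreover the number of ordered tuples of pairwise disjoint collections of the prescribed types and lengths agrees with $\prod_k\binom{n_2}{k}^{r_k}\big(\binom{n_1}{2}\binom{n_2}{k-2}\big)^{s_k}$ up to a factor $1+O(1/n)$, since imposing disjointness removes only a bounded number of vertices from pools of sizes $\sim n_2$ and $\sim n_1$. Combining these two estimates yields $\E\big[\prod_k(C_k(n))_{r_k}(L_k(n))_{s_k}\big]\to\prod_k\lambda_{C_k}^{r_k}\lambda_{L_k}^{s_k}$, so Lemma~\ref{poi} gives \eqref{conver}; the convergence of all finite-dimensional marginals is precisely convergence in the product topology.

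The main obstacle is making the two approximations of Step~2 quantitative and uniform over the arbitrary (but finitely many) collections involved, together with checking the degenerate regimes allowed by Condition~\ref{cond-crit-conn}: if $p_2=0$ then every $\lambda_{C_k}=0$ and $\lambda_{L_k}=0$ for $k\ge3$, and if $\rho_1=0$ then all $\lambda_{L_k}=0$. In those cases the Poisson limits degenerate to point masses at $0$, but the argument is unchanged, since $n_2/n\to0$ forces $\E[C_k(n)]\to0$ and $\E[L_k(n)]\to0$ for $k\ge3$ (and $n_1/\sqrt n\to0$ forces $\E[L_k(n)]\to0$ for all $k$), and likewise for the higher factorial moments.
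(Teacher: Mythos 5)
Your proposal is correct and follows essentially the same route as the paper: both verify the hypotheses of Lemma~\ref{poi} by computing the mixed factorial moments via Lemma~\ref{ind}, observing that only pairwise disjoint tuples contribute, counting those tuples, and showing the joint realization probability factorizes asymptotically into single-collection probabilities (your constants $a_k=(2k-2)!!$, $b_k=(2k-4)!!$ match the paper's). The only cosmetic difference is that you compute the joint probability of a disjoint tuple in one step via $(\ell_n-h-1)!!/(\ell_n-1)!!$, whereas the paper organizes the same computation as an induction peeling off one family of lines or cycles at a time.
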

\medskip

\proof We want to find the combined factorial moments of $(L_j(n), C_j(n))_{j \leq k}$ and show that
	\eqan{
	\label{mome}
	&\E [(C_1(n))_{r_1}(L_2(n))_{s_2}\cdots (C_k(n))_{r_k}(L_k(n))_{s_k}]\\
	&\qquad\to \prod_{j=2}^{k}  \left( \dfrac{\rho_1^{2} (2p_2)^{j-2}}{d^{j-1}}\right)^{s_j} 
	\prod_{j=1}^{k} \left( \dfrac{(2p_2)^{j}}{2kd^{j}}\right)^{r_j}.\nn
	}
We argue by induction on $k$. When $k=0$, both sides in \eqref{mome} are equal to 1, which initializes the induction hypothesis.

We next argue how to advance the induction hypothesis. We define 
	\[
	w_{k,j}(\mathbf r,\mathbf s)= \{ c_i (1),\ldots, c_i(r_i)\in \Cscr_i, 1 \leq i \leq k;  l_i(1),\ldots, l_{i} (s_i) \in \Lscr_i, 2\leq i \leq j \},
	\]
where all $c_i (1),\ldots, c_i(r_i)$ and $l_i(1),\ldots, l_{i} (s_i)$ are ordered lists without repetitions.
Further, $\mathscr{E}(w_{k,j}(\mathbf r,\mathbf s))$ denotes the event that all $c_i(h) \in w_{k,j}(\mathbf r,\mathbf s)$ form a cycle and all $l_i(h) \in w_{k,j}(\mathbf r,\mathbf s)$ form a line.
By Lemma \ref{ind},
	\begin{equation}
	\E [(C_1(n))_{r_1}(L_2(n))_{s_2}\cdots (C_k(n))_{r_k}(L_k(n))_{s_k}]
	=\sum_{w_{k,k}(\mathbf r,\mathbf s) }\P (\mathscr{E}(w_{k,k}(\mathbf r,\mathbf s))).
	\end{equation}
We rewrite this as 
	\begin{equation}
	\sum_{w_{k,k-1}(\mathbf r,\mathbf s) }\P (\mathscr{E}(w_{k,k-1}(\mathbf r,\mathbf s))) \ddsum_{l_1,\ldots,l_{s_k}\in \Lscr_k}
	\E[\indi_{i_1} \indi_{i_2} \cdots \indi_{i_{s_k}}\mid \mathscr{E}(w_{k,k-1}(\mathbf r,\mathbf s))],
	\end{equation}
where $\indi_{i_{s}}$ is the indicator that the vertices in $c_{i_s}$ form a line.

We call $a_1$ and $a_2$ the number of vertices of degree 1 and 2 necessary to create the cycles and lines prescribed by $w_{k,k-1}(\mathbf r,\mathbf s)$ and $a_e = a_1 +2a_2$ the number of half-edges they have. The values of $a_1, a_2$  are completely independent from the exact choice of $w_{k,k-1}(\mathbf r,\mathbf s)$ as long as all sets are disjoint (otherwise the event $\mathscr{E}(w_{k,k-1}(\mathbf r,\mathbf s))$ is impossible).  The number of possible choices of $s_k$ different disjoint $l \in \Lscr_k$ without using the vertices allocated for $w_{k,k-1}(\mathbf r,\mathbf s)$ are
	\begin{equation}\begin{split}\label{linnum}
	\dfrac{(n_1-a_1)!}{2^{s_k} (n_1-a_1 -2s_k)!}&\dfrac{(n_2 - a_2)!}{(k-2)!^{s_k}(n_2 - a_2-(k-2)s_k)!}(1+o(1))\\ 
	= &	\dfrac{n_1^{2s_k}}{2^{s_k}}\dfrac{n_2^{(k-2)s_k}}{(k-2)!^{s_k}}(1+o(1)),
	\end{split}
	\end{equation}
provided that $n_1, n_2 \to \infty$.
The probability that the first forms a line is
	\eqan{
	&\dfrac{2k-4}{\ell_n-a_e-1}\dfrac{2k-6}{\ell_n-a_e-3}\cdots \dfrac{2}{\ell_n-a_e-2k+5} 
	\dfrac{1}{\ell_n-a_e-2k+3}\nn\\
	&\qquad=\dfrac{(2k-4)!!}{\ell_n^{k-1}}(1+o(1)).
	}
For all the other lines we just have to subtract from $\ell_n-a_e$ the $2k-2$ half-edges that we have used for each of the previous ones, so that
	\begin{equation}
	\E[\indi_{i_1} \indi_{i_2} \cdots \indi_{i_{s_k}}\vert \mathscr{E}(w_{k,k-1}(\mathbf r,\mathbf s))]
	= \dfrac{(2k-4)!!^{s_k}}{\ell_n^{s_k(k-1)}}(1+o(1)).
	\end{equation}
Finally we obtain
	\begin{equation}\begin{split}
	\ddsum_{l_1,\ldots,l_{s_k}\in \Lscr_k}
	& \E[\indi_{i_1} \indi_{i_2} \cdots \indi_{i_{s_k}}\vert \mathscr{E}(w_{k,k-1}(\mathbf r,\mathbf s))]\\ 
	&= \dfrac{(\rho_1^{2}n)^{s_k}}{2^{s_k}}\dfrac{(p_2 n)^{(k-2)s_k}}{(k-2)!^{s_k}}
	\dfrac{(2k-4)!!^{s_k}}{\ell_n^{s_k(k-1)}}(1+o(1))
	\\&=\left( \dfrac{\rho_1^{2}(2n_2)^{k-2}}{2d^{k-1}}\right)^{s_k}(1+o(1)).
	\end{split}\end{equation}

We do the same for the cycles $C_k(n)$, writing
	\begin{equation}
	\sum_{w_{k-1,k-1}(\mathbf r,\mathbf s) }\P (\mathscr{E}(w_{k-1,k-1}(\mathbf r,\mathbf s))) 
	\ddsum_{c_1,\ldots,c_{r_k}\in \Cscr_k} \E[\indi_{i_1} \cdots \indi_{i_{r_k}}
	\vert \mathscr{E}(w_{k-1,k-1}(\mathbf r,\mathbf s))].
	\end{equation}
The number of possible choices of $r_k$ different disjoint $c \in \Cscr_k$ without using the vertices allocated for $w_{k-1,k-1}(\mathbf r,\mathbf s)$ are
	\begin{equation}\label{cicnum}
	\dfrac{(n_2 - a_2)!}{k!^{r_k}(n_2 - a_2-kr_k)!}(1+o(1))= \dfrac{(n_2)^{kr_k}}{k!^{r_k}}(1+o(1)),
	\end{equation}
provided that $n_2 \to \infty$.

The probability that the first forms a cycle is
	\begin{equation}
	\dfrac{2k-2}{\ell_n-a_e-3}\dfrac{2k-4}{\ell_n-a_e-5}\cdots \dfrac{2}{\ell_n-a_e-2k+3} \dfrac{1}{\ell_n-a_e-2k+1}(1+o(1)).
	\end{equation}
Again, for all the other cycles we just have to subtract the $2k$ half-edges that we have used for the previous ones so that
	\begin{equation}
	\E[\indi_{i_1} \indi_{i_2} \cdots \indi_{i_{r_k}}\vert \mathscr{E}(w_{k-1,k-1}(\mathbf r,\mathbf s))]
	= \dfrac{(2k-2)!!^{r_k}}{\ell_n^{r_k k}}(1+o(1)).
	\end{equation}
Thus, we obtain
	\begin{equation}\begin{split}
	\ddsum_{c_1,\ldots,c_{r_k}\in \Cscr_k}
	& \E[\indi_{i_1} \indi_{i_2} \cdots \indi_{i_{s_k}}\vert \mathscr{E}(w_{k-1,k-1}(\mathbf r,\mathbf s))]\\
	=&\dfrac{n_2^{kr_k}}{k!^{r_k}}\dfrac{(2k-2)!!^{r_k}}{(\ell_n)^{r_kk}}(1+o(1))
	=\left( \dfrac{(2p_2)^k}{2kd^k}\right)^{r_k}(1+o(1)).
	\end{split}\end{equation}
This advances the induction hypothesis. We now use induction to show that \eqref{mome} holds for every $k\geq 0$, and consequently prove the claim through the method of moments
in Lemma \ref{poi}.

In \eqref{linnum} and \eqref{cicnum}, we have assumed that $n_1, n_2 \to \infty$. When this is not satisfied we can just use a first moment method to show that $L_k(n) \overset{\P}{\to} 0$ for all $k$ if $n_1 = O(1)$, and $C_k(n)  \overset{\P}{\to} 0$ for all $k$ and $L_k(n)  \overset{\P}{\to} 0$ for $k\geq 3$ if $n_2 = O(1)$.
 \qed
\bigskip

We next show that in case of finite second moment, in particular, under the condition
	\[
	\lim_{n \to \infty} \dfrac{\E[D_n(D_n -1)]}{\E[D_n]} \to \nu < \infty,
	\] 
asymptotic distribution of the number of self-loops and multiple edges is independent from $(C_k)_{k\geq 3}$ and $(L_k)_{k \geq 2}$.

We first notice that connectivity and simplicity are {\em not} independent, since self-loops and multiple edges among vertices of degree 2 make the graph simultaneously disconnected and not simple, so for $\CMd$ to be simple, we have to require $C_1 (n) = C_2(n)=0$.

We define the number of self-loops and multiple edges in $\CMd$ by $S(n), M(n)$ and show the following joint convergence.

\begin{thm}[Poisson convergence of number self-loops and multiple edges] 
\label{poism}
Consider $\CMd$ in the critical window for connectivity defined in Condition \ref{cond-crit-conn}, and let $\nu_n= \E [D_n(D_n-1)]/\E[D_n] \to \nu \leq \infty$. Then
	\begin{equation}
	((L_k(n))_{ k \geq 2},(C_k(n))_{k\geq 3},S(n),M(n))  \xrightarrow{d} ((L_k)_{ k \geq 2},(C_k)_{ k \geq 3},S,M),
	\end{equation}
with $((L_k)_{ k \geq 2},(C_k)_{ k \geq 3},S,M)$ independent Poisson random variables with
	\begin{equation}\begin{split}
	L_k \overset{d}{=} \Poi \left(\dfrac{\rho_1^2 (2p_2)^{k-2}}{2d^{k-1}}\right),& \qquad
	C_k \overset{d}{=} \Poi \left(\dfrac{(2p_2)^k}{2kd^k} \right),\\
	S \overset{d}{=} \Poi \left(\nu/2\right),&\qquad
	M \overset{d}{=} \Poi \left(\nu^2/4\right).\quad
	\end{split}\end{equation}
\end{thm}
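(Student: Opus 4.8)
The plan is to extend the method-of-moments computation that already proved Theorem \ref{poicl} by adding two further coordinates, $S(n)$ and $M(n)$, to the vector of sums of indicators. First I would write $S(n) = \sum_{v \in [n]} S_v(n)$ where $S_v(n)$ counts self-loops at $v$ (over all vertices of degree $\geq 2$, noting that a degree-2 vertex with a self-loop is precisely a cycle of length 1, so strictly speaking $S(n)$ here should be read as counting all self-loops, including those at degree-2 vertices; since the statement restricts to $(C_k)_{k\geq 3}$ and the joint law with $C_1, C_2$ is already handled, one can also just track the self-loops and multiple edges at vertices of degree $\geq 3$ and argue separately that the contribution of degree-2 vertices is already accounted for — this bookkeeping point needs care). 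Similarly $M(n) = \sum_{\{u,v\}} M_{uv}(n)$ counts pairs of vertices joined by at least two parallel edges. Both are sums of indicators of the type to which Lemma \ref{ind} applies, so the joint factorial moments of the full vector $((L_k(n))_{k}, (C_k(n))_{k}, S(n), M(n))$ are sums over disjoint collections of configurations of the probability that all the prescribed lines, cycles, self-loops and multiple edges are present simultaneously.

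The key steps, in order: (1) Condition on an event $\mathscr{E}(w)$ that fixes all the lines and cycles (as in the proof of Theorem \ref{poicl}), which uses up a bounded number $a_e$ of half-edges; since all the degree-1 and degree-2 half-edges involved in lines/cycles are disjoint from the half-edges relevant to self-loops/multiple edges at high-degree vertices, the conditional probabilities factor asymptotically. (2) Compute $\E[S(n)]$: the number of (ordered) pairs of half-edges at a single vertex is $\sum_v d_v(d_v-1) = \ell_n(\nu_n+o(1)) \sim n d \nu$ (using $\E[D_n(D_n-1)]/\E[D_n] \to \nu$ together with $\E[D_n] \to d$), and each such pair is matched into an edge with probability $\sim 1/\ell_n$, giving $\E[S(n)] \to \nu/2$ after the $1/2$ for ordering. (3) Compute $\E[M(n)]$: the number of ways to choose two half-edges at $u$ and two at $v$ is $\sim \tfrac14 (\sum_v d_v(d_v-1))^2 = \tfrac14(\nu \ell_n)^2(1+o(1))$, and the probability these four half-edges form two $u$–$v$ edges is $\sim 1/\ell_n^2$, giving $\E[M(n)] \to \nu^2/4$. (4) Show the higher factorial moments factorize: because each configuration constrains only finitely many half-edges and there are $\Theta(n)$ (resp. $\Theta(n^2)$) available, the sum over $r$ disjoint self-loop configurations times the probability $\sim \ell_n^{-r}$ yields $(\nu/2)^r(1+o(1))$, and likewise for multiple edges and for cross terms — the disjointness corrections are all $1+o(1)$. (5) Invoke Lemma \ref{poi} to conclude joint convergence to independent Poissons with the stated parameters, and note the convergence extends to the infinite vector in the product topology exactly as in Theorem \ref{poicl}.

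The main obstacle I anticipate is not any single limit but the careful verification that the cross factorial moments truly factorize, i.e. that self-loops/multiple edges at high-degree vertices are asymptotically independent of the lines and cycles built from degree-1 and degree-2 vertices. One has to check that conditioning on $\mathscr{E}(w)$ changes the relevant counts and pairing probabilities only by $1+o(1)$ factors — this is where the finiteness of $\nu$ is used, since without a second-moment control the count $\sum_v d_v(d_v-1)$ need not be $\Theta(n)$ and $\E[M(n)]$ could diverge. A secondary subtlety is the degree-2 overlap: self-loops and double edges among degree-2 vertices are simultaneously instances of $C_1(n)$, $C_2(n)$ and of $S(n)$, $M(n)$, so the bookkeeping must either exclude these from $S(n), M(n)$ or track them jointly and verify the claimed product form still holds; since $\nu < \infty$ forces $p_2$-related contributions of order $1/n$ to the extra self-loop/double-edge counts beyond those coming from degree-2 vertices to be negligible in the limit, this reconciles with the parameters $\nu/2$ and $\nu^2/4$ but deserves an explicit line.
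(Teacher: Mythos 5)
Your proposal follows essentially the same route as the paper: extend the multivariate method of moments of Theorem \ref{poicl} by two coordinates, condition on the event $\mathscr{E}(w)$ that fixes the prescribed lines and cycles, and show that the conditional factorial moments of $(S(n),M(n))$ converge to $(\nu/2)^{t+2u}$, so that Lemma \ref{poi} yields joint convergence to independent Poissons. The only real difference is that the paper does not recompute the moments of $S(n)$ and $M(n)$ by hand: it observes that, conditionally on $\mathscr{E}(w)$, the pair $(S(n),M(n))$ has the law of the number of self-loops and multiple edges of a configuration model whose degree sequence $\boldsymbol{d}'$ is obtained from $\boldsymbol{d}$ by deleting the finitely many vertices used in $w$, so that $\nu'=\nu$ and the classical result (Janson) gives $\E[(S(n))_t(M(n))_u]\to(\nu/2)^{t+2u}$ directly; your steps (2)--(4) amount to re-deriving that known result, which is legitimate but not needed. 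One correction to your remark on the degree-2 overlap: the self-loops at degree-2 vertices (i.e.\ $C_1(n)$) are \emph{not} an asymptotically negligible part of $S(n)$ --- they contribute $2n_2/(2\ell_n)\to p_2/d$ to the limiting mean $\nu/2$ --- so the variant in which you restrict $S(n)$ to vertices of degree at least $3$ would produce the wrong parameter $\nu/2-p_2/d$, and similarly for $M(n)$. The correct bookkeeping, which is what the theorem statement and the paper use, is to let $S(n)$ and $M(n)$ count \emph{all} self-loops and multiple edges; this creates no conflict with the claimed independence because $C_1(n)$ and $C_2(n)$ are simply excluded from the limiting vector, and the conditioning event $\mathscr{E}(w)$ (which involves only cycles of length at least $3$ and lines) forces no self-loops or multiple edges.
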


\proof We again use multivariate method of moments in Lemma \ref{poi}. We aim to find the combined factorial moments of $((L_j(n))_{2\leq j \leq k}, (C_j(n))_{3\leq j \leq k},S(n),M(n))$, and show that
	\begin{equation}
	\label{mome2}
	\begin{split}
	\E [(L_2(n))_{s_2}(C_3(n))_{r_3}\cdots (C_k(n))_{r_k}(L_k(n))_{s_k}(S(n))_{t}(M(n))_{u}] 
	\\ \to \left(\dfrac{\nu}{2}\right)^{t+2u}\prod_{j=2}^{k}  \left( \dfrac{\rho_1^{2} (2p_2)^{j-2}}{2d^{j-1}}\right)^{s_j} 
	\prod_{j=1}^{k} \left( \dfrac{(2p_2)^{j}}{2kd^{j}}\right)^{r_j}.
	\end{split}
	\end{equation}
We now define 
	\[
	w'_{k,j}(\mathbf r,\mathbf s)= \{ c_i (1), ..., c_i(r_i)\in \Cscr_i, 3 \leq i \leq k;  l_i(1),...,l_{i}(s_i) \in \Lscr_i, 2\leq i \leq j \}
	\]
as the choice of subsets that can form such lines and cycles, and by Lemma \ref{poi},
	\begin{equation}
	\sum_{w'_{k,k}(\mathbf r,\mathbf s)}\P (\Ecal(w'_{k,k}(\mathbf r,\mathbf s)))\E [(S (n))_{t}(M (n))_{u}\mid \Ecal(w'_{k,k}(\mathbf r,\mathbf s))].
	\end{equation}

Conditionally on $\Ecal(w'_{k,k}(\mathbf r,\mathbf s))$, the random vector $(S(n), M(n))$ has the same law as the number of self-loops and multiple edges in a configuration model with degree sequence $\boldsymbol{d}'$, which is obtained from $\boldsymbol{d}$ by removing the vertices appearing in $w'_{k,k}(\mathbf r,\mathbf s)$. We notice that $\boldsymbol{d}'$ is independent from the exact choice of $w'_{k,k}(\mathbf r,\mathbf s)$. Thus, when $D'_n$ denotes the degree of a uniform random vertex selected from $\boldsymbol{d}'$ and $\nu' = \lim_{n \to \infty} \dfrac{\E[D'_n(D'_n -1)]}{\E[D'_n]}$, 
	\[
	\E [(S (n))_{t} (M(n))_{u}] \to \left(\dfrac{\nu'}{2}\right)^{t+2u}
	\]
(see e.g., \cite{Jan09, Jan14}). Since we are removing only a finite number of vertices from $\boldsymbol{d}$, we have that $\nu' =\nu$ and we thus obtain
	\begin{equation}
	\left(\nu/2\right)^{t+2u}\sum_{w'_{k,k}(\mathbf r,\mathbf s)}\P (\Ecal(w'_{k,k}(\mathbf r,\mathbf s))).
	\end{equation}
We finally obtain \eqref{mome2} using the same induction argument used to prove \eqref{mome}, which completes the proof. 
\qed

\section{Connectivity among vertices of degree at least three}
\label{sec-conn-three}

In this section, we show that in the connectivity critical window \whp{} all vertices $v$ with $d_v \geq 3$ are in the giant component. This result is already known when $\min_{i \in [n]} d_i \geq 2$, we show that it still holds even in the presence of a sufficiently small amount of vertices of degree $1$ and is stated in the following theorem.

\begin{thm}[Connectivity among vertices with $d_v \geq 3$]
\label{geq3}
Consider $\CMd$ in the connectivity critical window defined in Condition \ref{cond-crit-conn}. Then 
	\begin{equation}
	\label{bd-1}
	\E [\#\{ v \in [n] \colon d_v \geq 3, |\cluster(v)| < n/2\} ] \to 0.
	\end{equation}
Consequently,
	\begin{equation}
	\label{bd-2}
	\E [\#\{ v \in [n] \setminus \Cmax \colon d_v \geq 3\}] \to 0.
	\end{equation}
\end{thm}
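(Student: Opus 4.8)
The plan is to bound the expectation in \eqref{bd-1} by a union bound over vertices $v$ with $d_v\geq 3$ and, for each such $v$, over the possible sizes $m$ of the component $\cluster(v)$, using the standard exploration/path-counting estimates for the configuration model. Fix $v$ with $d_v\geq 3$. For $\cluster(v)$ to have $m<n/2$ vertices, there must be a set of $m$ vertices containing $v$ that is ``closed off'' from the rest of the graph, i.e. all half-edges incident to this set pair among themselves. I would enumerate over the total degree $b=\sum_{u\in \cluster(v)}d_u$ of such a candidate set: the number of ways to pick the other $m-1$ vertices is crudely at most $\binom{n}{m-1}$, but more usefully I will track the half-edge budget, since the probability that a prescribed set of $b$ half-edges (with $b$ even, $b\geq 3$ because $d_v\geq 3$) pairs internally is $(b-1)!!\,(\ell_n-b-1)!!/(\ell_n-1)!! = O\big(b^{b/2}/\ell_n^{b/2}\big)$. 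Summing the number of vertex-sets with total degree $b$ against this probability, and using Condition \ref{cond-crit-conn} (so $\ell_n=\Theta(n)$, $n_1=O(\sqrt n)$, $n_2=O(n)$, and the degree distribution converges), the dominant contributions come from small $b$, and the $d_v\geq 3$ constraint forces $b\geq 3$, which is exactly what makes the sum over $v$ of the per-vertex probability vanish like a negative power of $n$.

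More concretely, I would split the count of ``small'' components containing $v$ according to how many of their vertices have degree $1$ versus degree $\geq 2$. A component of $j$ vertices of degree $\geq 2$ (one of which is $v$, with $d_v\geq 3$) together with $i$ vertices of degree $1$ has total degree $b\geq 2(j-1)+3+i=2j+1+i$, which is odd unless $i$ is odd, so $b\geq 2j+2$ and in fact $b\geq 2j+2$ with the extra $+1$ from $v$ meaning the internal-pairing probability carries an extra factor $\ell_n^{-1}\asymp n^{-1}$ compared with a pure cycle on $j$ degree-$2$ vertices. The number of choices of the $j-1$ further degree-$\geq 2$ vertices is $O(n^{j-1})$ after accounting for the degree-$2$ bulk, the number of choices of the $i$ degree-$1$ vertices is $O(n_1^{i})=O(n^{i/2})$, and the pairing probability is $O(n^{-(b-1)/2})=O(n^{-(2j+1+i)/2})$; multiplying, the $n$-powers cancel down to $O(n^{-1/2})$ uniformly in $j,i$ (the series in $j$ and $i$ converging because $2p_2/d<1$ and $\rho_1<\infty$ by Condition \ref{cond-crit-conn}, exactly as in the geometric sums appearing in Theorem \ref{poicl}). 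Summing over the $\le n$ choices of $v$ then gives $O(n^{1/2})$ — which is the wrong side. So the real point is that when $d_v\geq 3$ the total degree is $b\ge 2j+2$ AND odd-parity forces $i\ge 1$, pushing one more power of $n^{-1/2}$ from the pairing probability and one more $n^{1/2}$ from choosing a degree-$1$ vertex — these cancel, and what actually saves the day is that $b\ge 2j+2$ gives $O(n^{-(b-1)/2})$ with $b-1\ge 2j+1$, i.e. relative to the number of vertex choices $O(n^{j-1+i/2})$ we gain a net $n^{-(b-1)/2 + j-1+i/2}=n^{-3/2}$ per vertex $v$ when $d_v=3,i=1$, and more when degrees or $i$ are larger. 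Hence $\sum_v \P(\cdot)=O(n\cdot n^{-3/2})=O(n^{-1/2})\to 0$, and the boundary cases $\rho_1=\infty$ or $d=\infty$ are handled by the same bookkeeping, replacing $n_1^i$ and $\ell_n$ by their actual values rather than the limiting constants.

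For the passage from \eqref{bd-1} to \eqref{bd-2}: by the supercriticality established in the outline ($\nu_n\to\nu>1$), \cite{JanLuc09,MolRee98} give $|\Cmax|=\Theta_{\P}(n)$, and in particular $\P(|\Cmax|\ge n/2)\to 1$; on this event any vertex $v\notin\Cmax$ has $|\cluster(v)|\le n-|\Cmax|<n/2$, so $\{v\in[n]\setminus\Cmax\}\subseteq\{|\cluster(v)|<n/2\}\cup\{|\Cmax|<n/2\}$, and since the number of degree-$\ge 3$ vertices is at most $n$, the contribution of the event $\{|\Cmax|<n/2\}$ to the expectation in \eqref{bd-2} is at most $n\,\P(|\Cmax|<n/2)$; to make this go to $0$ I would instead use the uniqueness of the giant from \cite{JanLuc09,MolRee98} in the sharper form $\P(|\Cmax|<n/2)=o(1/n)$, or, more robustly, argue directly: if $v\notin\Cmax$ then either $|\cluster(v)|<n/2$, or $\cluster(v)$ is itself a linear component disjoint from $\Cmax$, and the second-largest-component bound $|\cluster_{\sss(2)}|=o_\P(n)$ rules out two linear components, so deterministically at most one vertex can be of the second type — contributing $o(1)$ to the expectation. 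Combining, \eqref{bd-2} follows from \eqref{bd-1}.

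The main obstacle is the first step: getting the union bound over $v$ with $d_v\geq 3$ and over all component sizes to close with room to spare, i.e. verifying that the parity/degree constraint $d_v\ge 3$ really does buy the extra power of $n^{-1}$ (beyond the $O(1)$ one would get for cycles and lines) uniformly over the shape of the small component, and controlling the combinatorial prefactors $O(n^{j-1})$ for the non-degree-$2$ vertices without a convergent second-moment assumption on $D_n$ — this is where Condition \ref{cond-crit-conn}(1), the convergence $D_n\convd D$, must be used to tame the number of vertices of each fixed degree $\ge 3$. Carefully organizing the sum so that the geometric series in the number of degree-$2$ vertices converges (using $2p_2<d$) while the remaining degrees contribute only finitely is the crux.
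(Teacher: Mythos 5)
Your route is genuinely different from the paper's: you propose a direct first-moment bound over ``closed-off'' vertex sets containing $v$ (essentially \L uczak's original enumeration argument), whereas the paper runs the exploration process started at $v$ and shows, via Lemmas \ref{dom}--\ref{down}, that the number of active half-edges $S_t^{\sss(v)}$ stays positive up to the time $T_{1/2}$ with probability $1-o(n^{-1})$ per vertex: the only mechanisms that decrease $S_t^{\sss(v)}$ are hitting a degree-$1$ vertex (probability $O(n^{-1/2})$ per step) or a self-intersection (probability $O(S_t^{\sss(v)}/n)$), and starting from $d_v\geq 3$ several such events are needed, whence $o(n^{-1})$. Your computation for components of \emph{bounded} size is essentially correct and recovers the right $O(n^{-3/2})$ per-vertex contribution.

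The gap is the regime of growing component sizes, which your own closing paragraph flags as ``the crux'' without resolving. As literally stated, your two ingredients do not close: with $\binom{n}{m-1}$ choices of vertex set and pairing probability $O\big(b^{b/2}/\ell_n^{b/2}\big)$, a set of $m$ vertices of total degree $b\approx 2m$ contributes roughly $(en/m)^{m}\,(2m/\ell_n)^{m}=(2en/\ell_n)^{m}=(2e/d)^{m}$, which diverges whenever $d<2e$ --- and $d$ can be arbitrarily close to $2$ in the critical window. To make the first moment work for $m=\Theta(n)$ one must use the exact form $(b-1)!!\,(\ell_n-b-1)!!/(\ell_n-1)!!=\binom{\ell_n/2}{b/2}\big/\binom{\ell_n}{b}$ and run an entropy comparison over the full degree profile of $S$ (how many vertices of each degree $k$ it contains), where the term-by-term ratios are not uniformly bounded away from $1$ and where the lack of a second-moment assumption on $D_n$ requires extra care with high-degree vertices; your ``geometric series because $2p_2/d<1$'' only controls the degree-$2$ contribution for small sets. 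This missing analysis is the entire substance of the claim, and it is precisely what the paper's exploration argument is designed to avoid. (Your passage from \eqref{bd-1} to \eqref{bd-2} is fine, though simpler than you make it: a component of size greater than $n/2$ is necessarily $\Cmax$, so no probabilistic input on $|\Cmax|$ is needed.)
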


We will use the usual exploration process of the configuration model, as we describe now. At each time $t$, we define the sets of half-edges $\{ \Acal_t , \Dcal_t , \Ncal_t \}$ (the active, dead and neutral sets), and explore them in the following way:

\begin{itemize}
\item[{\tt Initalize}] We pick a vertex $v \in [n]$ uniformly at random with $d_v \geq 3$ and we set all its half-edges as active. All other half-edges are set as neutral.
\item[{\tt Step}] At each step $t$, we pick a half-edge $e_1(t) $ in $\Acal_t$ uniformly at random, and we pair it with another half-edge $e_2(t)$ chosen uniformly at random in $\Acal_t \cup \Ncal_t$. We set $e_1(t), e_2(t)$ as dead.\\
\textbf{If} $e_2(t) \in \Ncal_t$, then we find the vertex $v(e_2(t))$ incident to $e_2(t)$ and activate all its other half-edges.
\end{itemize}

As usual, the above exploration forms the graph at the same time as that it explores the neighborhood of the vertex $v$. A convenient way to encode the randomness in the exploration algorithm is to first choose a permutation $\xi$ of the half-edges, chosen uniformly at random from the set of all permutations of the half-edges. Then we run the exploration choosing as $e_1(t)$ and $e_2(t)$ always the first feasible half-edges in the permutation according to the exploration rules. This means that we take the first available active half-edge as $e_1(t)$, pair it to the first available active or neutral half-edge as $e_2(t)$ to create an edge consisting of $e_1(t)$ and $e_2(t)$, and then to update the status of all the half-edges as above. 

The above description, that we will rely on for the remainder of this document, offers the possibility to analyse some properties of the exploration before running it and will be useful to prove that whp we will not run out of high-degree vertices too early in the exploration.

We define the process $S_t^{\sss(v)} = |\Acal_t|$. The update rules of $S_t^{\sss(v)}$ are
	\begin{equation}
	\label{St-rec}
	S_0^{\sss(v)}= d_v, \quad\qquad 
	S_{t+1}^{\sss(v)}-S_t^{\sss(v)}= \begin{cases}d_{v(e_2(t))}-2 & \text{ if } e_2(t) \in \Ncal_t, \\
	-2 & \text{ if } e_2(t) \in \Acal_t.\end{cases}
	\end{equation}

We define $T_0$ as the smallest $t$ such that $X_t =0$ and 
	\eqn{
	\label{T-half-def}
	T_{1/2}=\max \{t\colon |\Ncal_t| > n/2\}.
	}
By definition of the exploration process, if $T_0 \geq T_{1/2}$ then $|\cluster (v)| \geq n/2$ (and, in particular, $v\in \Cmax$), so that  proving the following proposition is sufficient to prove Theorem \ref{geq3}.

\begin{prop}[No hit of zero of exploration]
\label{surv}
Consider $\CMd$ in the critical window for connectivity defined in Condition \ref{cond-crit-conn}. Let $v$ be such that $d_v \geq 3$. Then 
	\begin{equation}
	\P ( \exists t \leq T_{1/2} \colon S_t^{\sss(v)} =0) = o(n^{-1}).
	\end{equation}
\end{prop}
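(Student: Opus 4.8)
The plan is to analyze the exploration walk $S_t^{\sss(v)}$ up to time $T_{1/2}$ and show that the probability it ever hits zero is $o(n^{-1})$. First I would observe that $S_t^{\sss(v)}$ has positive drift for all $t \le T_{1/2}$: at each step, $e_2(t)$ falls in $\Ncal_t$ with probability at least $|\Ncal_t|/(|\Acal_t|+|\Ncal_t|-1)$, which is bounded below by a constant (say $\tfrac12$) since $|\Ncal_t| > n/2$ while the total degree is $\Theta(n)$; and conditionally on landing in $\Ncal_t$, the vertex $v(e_2(t))$ is a size-biased pick from the remaining vertices, so it has degree $\ge 3$ — and hence contributes $d_{v(e_2(t))}-2\ge 1$ — with probability bounded below, because the fraction of half-edges attached to degree-$1$ and degree-$2$ vertices is at most $2p_2/(d) + o(1) < 1$ (using $p_2<1$ and $\rho_1<\infty$, $n_1 = O(\sqrt n)$). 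Thus there is a constant $\delta>0$ such that $\E[S_{t+1}^{\sss(v)}-S_t^{\sss(v)} \mid \Fcal_t] \ge \delta$ on $\{t \le T_{1/2}\}$, and the increments have exponential tails (controlled by the limiting degree variable $D$, which exists by Condition \ref{cond-crit-conn}(1)). This makes $S_t^{\sss(v)}$ a submartingale-like process with uniformly positive drift started from $d_v \ge 3$.

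Next I would upgrade this to the desired $o(n^{-1})$ bound. The clean way is to introduce the stopped process and pass to an exponential supermartingale: pick $\theta>0$ small enough that $\E[\e^{-\theta(S_{t+1}^{\sss(v)}-S_t^{\sss(v)})}\mid\Fcal_t] \le \e^{-\theta\delta/2} < 1$ on $\{t\le T_{1/2}\}$ — this is possible for small $\theta$ by a Taylor expansion of the moment generating function of the (exponentially integrable) increments around $\theta=0$, using the strictly positive drift. Then $M_t := \e^{-\theta S_{t\wedge\tau}^{\sss(v)}}(\e^{\theta\delta/2})^{-(t\wedge\tau)}$, where $\tau = T_0 \wedge T_{1/2}$, is a supermartingale, and optional stopping gives $\P(T_0 \le T_{1/2}) \le \P(\exists t\le T_{1/2}\colon S_t^{\sss(v)}=0) \le \e^{-\theta d_v}\E[(\e^{\theta\delta/2})^{T_0\wedge T_{1/2}}]^{-1}$... more directly: on the event $\{S_t^{\sss(v)}=0\}$ we have $\e^{-\theta S_t^{\sss(v)}}=1$, so $\P(\exists t\le\tau\colon S_t^{\sss(v)}=0)\le \e^{-\theta d_v} \le \e^{-3\theta}$ is only a constant — so a bare exponential-supermartingale argument is not enough on its own. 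To get $o(n^{-1})$ I would instead run the exploration for a short deterministic time $t_n = K\log n$ with $K$ large: with the positive drift $\delta$, a standard Azuma/Bernstein concentration bound (valid because the centered increments have sub-exponential tails) gives that $S_{t}^{\sss(v)} \ge d_v + \tfrac{\delta}{2}t$ fails on $[0,t_n]$ with probability $\le \e^{-cK\log n} = o(n^{-1})$ for $K$ large; in particular $S_t^{\sss(v)}$ stays positive on $[0,t_n\wedge T_{1/2}]$. Then I would argue that by time $t_n$ either $T_{1/2}$ has already occurred (in which case we are done, since $T_0 \ge T_{1/2}$ forces $|\cluster(v)|\ge n/2$) or $|\Acal_{t_n}| = S_{t_n}^{\sss(v)}$ is at least of order $\log n$; from that point one either continues the concentration argument inductively or, more efficiently, notes that with $\Omega(\log n)$ active half-edges and still-positive drift the process survives to time $T_{1/2}$ except with probability $\e^{-c\log n} = o(n^{-1})$ — e.g.\ by the exponential supermartingale bound applied from the random starting level $S_{t_n}^{\sss(v)} \ge c'\log n$, which now yields $\e^{-\theta c'\log n} = o(n^{-1})$.

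The main obstacle I anticipate is the bookkeeping to make the "positive drift" rigorous \emph{uniformly in $t \le T_{1/2}$ and over the random realization}: one must confirm that depleting the neutral set of high-degree vertices during the exploration does not erode the drift before time $T_{1/2}$. Since only $O(\log n)$ steps are taken before either reaching $T_{1/2}$ or accumulating $\Omega(\log n)$ active half-edges, at most $O(\log n)$ vertices are removed, so the empirical degree distribution of the neutral half-edges is unchanged up to $o(1)$, and $\nu_n$-type quantities stay bounded away from their critical values; this is where Condition \ref{cond-crit-conn}(2)–(5), in particular $p_2 < 1$ and $n_1 = O(\sqrt n)$, enter decisively. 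A secondary technical point is justifying the exponential integrability of the increments uniformly in $n$: this follows from $D_n \convd D$ together with $\E[D_n] \to d < \infty$ only after a truncation argument or an explicit assumption; I would handle the contribution of atypically high-degree vertices separately, noting there are $o(n)$ half-edges on vertices of degree exceeding any fixed threshold, so with probability $1-o(n^{-1})$ none is encountered in the first $O(\log n)$ steps. Assembling these pieces gives $\P(\exists t \le T_{1/2}\colon S_t^{\sss(v)}=0) = o(n^{-1})$, and then summing over the (at most $n$) choices of $v$ yields \eqref{bd-1} and hence Theorem \ref{geq3}.
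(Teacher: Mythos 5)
Your overall skeleton (establish a uniformly positive drift, treat an early phase and a late phase separately, and note that the late phase is easy once $S_t^{\sss(v)}$ has grown polylogarithmically/polynomially) matches the spirit of the paper's argument, but the mechanism you propose for the early phase does not work, and this is where the whole difficulty of the proposition lives. A concentration inequality of Azuma/Bernstein/exponential-supermartingale type bounds the probability of a deviation $\lambda$ over $t$ steps by something like $\exp(-c\lambda^2/t)$ or $\exp(-c\lambda)$; to hit zero from $S_0^{\sss(v)}=d_v=3$ the required deviation is $\lambda=O(1)$, so these bounds can never produce anything smaller than a constant for the first few steps, let alone $o(n^{-1})$. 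Your specific claim that ``$S_t^{\sss(v)}\geq d_v+\tfrac{\delta}{2}t$ fails on $[0,K\log n]$ with probability $\e^{-cK\log n}$'' is false: already at $t=1$ the walk takes a down-step with probability of order $n_1/\ell_n=\Theta(n^{-1/2})$ (pairing to a degree-one vertex), so that event fails with probability at least $\Theta(n^{-1/2})$, which is far larger than $o(n^{-1})$. Enlarging $K$ does not help, because the obstruction is the size of the deviation, not the length of the time horizon. You correctly sensed that the bare supermartingale bound only gives a constant, but the fix you propose inherits exactly the same defect.

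The missing idea is that the $o(n^{-1})$ bound must come from the \emph{individual rarity of each down-step}, information that generic concentration discards. Under Condition \ref{cond-crit-conn} one has $n_1=O(\sqrt n)$, so a step of size $-1$ (hitting a degree-one vertex) has conditional probability $O(n^{-1/2})$, and a step of size $-2$ (pairing to an active half-edge) has conditional probability $O(S_t^{\sss(v)}/n)$. Starting from level at least $3$, reaching zero before the walk has grown requires an explicit conjunction of such rare events --- three degree-one hits, or one degree-one hit and one self-intersection at a low level, or two self-intersections at low levels --- and each such conjunction, after a union bound over the $O(n^{3/8})$ choices of times in $[0,n^{1/8}]$, has probability $o(n^{-1})$ (e.g.\ $n^{3/8}\cdot n^{-3/2}$ for the three degree-one hits). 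This combinatorial enumeration is the content of the paper's Lemma \ref{1/8} and is the step your plan lacks; an analogous enumeration (Lemma \ref{down6}) controls the total downward displacement while the walk is below level $3\gamma n^{1/8}$. A secondary gap: your drift must be maintained over the full $\Theta(n)$ steps up to $T_{1/2}$, not just the first $O(\log n)$, so the ``only $O(\log n)$ vertices are depleted'' argument does not suffice; the paper handles this with a separate a priori argument (Lemma \ref{dom}, via the random permutation of half-edges) showing that a positive fraction of degree-$\geq 3$ vertices remains neutral at time $T_{1/2}$ with probability $1-o(n^{-1})$.
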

\medskip

Since there are $n$  vertices in the graph, Proposition \ref{surv} indeed proves \eqref{bd-1} in Theorem \ref{geq3}. 
We start the proof with a bound on the depletion of high-degree vertices. 

\begin{lem}[Bound on the depletion of high-degree vertices]
\label{dom}
Consider $\CMd$ in the connectivity critical window defined in Condition \ref{cond-crit-conn} and perform the exploration up to time $T_{1/2}=\max \{t \colon |\Ncal_t| > n/2\}$. Then there exists $\varepsilon >0$ such that
	\begin{equation}
	\P ( \# \{ v \in \Ncal_{T_{1/2}}\colon d_v \geq 3\} < \varepsilon n) = o(n^{-1}).
	\end{equation} 
\end{lem}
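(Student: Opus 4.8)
The plan is to show that, throughout the exploration up to time $T_{1/2}$, at most a negligible fraction of the half-edges incident to vertices of degree $\geq 3$ have been touched, so that $\varepsilon n$ of these vertices must survive in $\Ncal_{T_{1/2}}$. The key observation is that $|\Ncal_{T_{1/2}}| \geq n/2$ by definition, so the total number of half-edges that have been moved out of the neutral set (either killed or activated) by time $T_{1/2}$ is at most $\ell_n - 2\cdot(n/2)\cdot 1 \le \ell_n$ in a crude bound, but more precisely is bounded by a quantity we can control: each step removes exactly two half-edges from $\Acal_t \cup \Ncal_t$, and at time $t$ the number of neutral half-edges has decreased by at most $2t$ (in fact by $d_{v(e_2(s))}$ whenever a neutral half-edge is hit, plus the one half-edge $e_2(s)$ itself). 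I would first give a deterministic bound: writing $U_t$ for the number of vertices whose half-edges have all been activated or killed by time $t$, one has $\sum_{v \in U_t} d_v \le 2t + d_v^{(0)}$ is not quite right; instead I would track $\ell_n - |\Ncal_t|$, the number of non-neutral half-edges, which increases by at most $\max_i d_i$ per step — but since $D$ has no second-moment assumption here we cannot bound $\max_i d_i$, so this must be done more carefully.

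The cleaner route, which I expect to be the one taken, is to use the permutation encoding $\xi$ introduced just above the lemma. Fix the uniform random permutation $\xi$ of the $\ell_n$ half-edges. The set of half-edges that are neutral at time $T_{1/2}$ is determined by how far into $\xi$ the exploration has progressed; in particular, the vertices that remain entirely neutral are exactly those none of whose half-edges have been "used" as an $e_2(t)$ or activated. Since $|\Ncal_{T_{1/2}}| \ge n/2$, at least $n/2$ half-edges are still neutral, hence untouched. Now I would argue that, conditionally on the (large) number of untouched half-edges, the \emph{identity} of those half-edges is exchangeable, so the number of degree-$\ge 3$ vertices all of whose half-edges lie in the untouched set is, in distribution, at least as large as the number of degree-$\ge 3$ vertices surviving a uniform sampling of $\ge n/2$ half-edges without replacement. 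Concretely: a vertex of degree exactly $k\ge 3$ stays neutral if all $k$ of its half-edges are among the $\ge n/2$ untouched ones, which happens with probability $\gtrsim (1/2)^k - o(1)$ uniformly; since $\sum_{k \ge 3} k\, n_k / \ell_n \to (d - 2p_2 - \rho_1\cdot 0)/d$ is bounded away from $0$ (using $p_2 < 1$ and $d < \infty$), the expected number of surviving degree-$\ge 3$ vertices is $\ge c\, n$ for some $c > 0$.

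To upgrade the expectation bound to the required $o(n^{-1})$ tail bound, I would use a concentration argument: the number of surviving high-degree vertices is a function of the ordering $\xi$ with bounded differences when one transposes two half-edges (swapping two half-edges changes the count by at most $2$, since it affects at most the two vertices they belong to — though a vertex could have many half-edges, flipping the membership of one half-edge flips the survival of at most one vertex, giving a bounded difference of $1$ per swap). Applying Azuma--Hoeffding (or McDiarmid) to the martingale associated with revealing $\xi$ position by position gives a Gaussian-type tail $\exp\{-c n\}$, which is certainly $o(n^{-1})$. The main obstacle I anticipate is handling the absence of a second moment on $D$: one cannot bound $\max_i d_i$, so the bounded-difference constant and the deterministic "number of touched half-edges $\le 2T_{1/2}$" step must be phrased purely in terms of half-edges (of which there are exactly $\ell_n$), never in terms of vertices or degrees; stating the survival event as "all half-edges of $v$ are untouched" rather than reasoning vertex-by-vertex is what makes this go through. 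A secondary subtlety is that $T_{1/2}$ is itself a random stopping time, so one must either bound the relevant quantities uniformly over all $t$ with $|\Ncal_t| > n/2$, or note that monotonicity of $|\Ncal_t|$ makes $\{T_{1/2} = t\}$ a function of the first $O(t)$ entries of $\xi$ and apply the concentration bound conditionally.
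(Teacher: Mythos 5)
Your overall strategy --- use the permutation encoding, show that a positive fraction of the degree-$\geq 3$ vertices have all their half-edges ``late'' in $\xi$ and hence are still neutral at $T_{1/2}$, then apply an exponential concentration bound --- is the same as the paper's, but two steps as written do not go through. First, the claim that, conditionally on its size, the identity of the set of half-edges still neutral at time $T_{1/2}$ is exchangeable is unjustified and false in general: that set is the union of all half-edges of undiscovered vertices, so its law is far from uniform over subsets of a given cardinality. The correct move, which you gesture at but do not carry out, is a purely deterministic inclusion: since the exploration always consumes the earliest feasible half-edge in $\xi$ and $|\Ncal_t|>n/2$ for all $t\leq T_{1/2}$, the half-edge $e_2(t)$ always has position at most $\ell_n-n/2$ in $\xi$, so no half-edge among the last $n/2$ positions is ever hit before $T_{1/2}$; hence every vertex all of whose half-edges occupy the last $n/2$ positions of $\xi$ is still neutral at $T_{1/2}$. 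The last $n/2$ positions do form a uniformly random set, so the probability computation then becomes legitimate. (Note also that this probability for a degree-$k$ vertex is about $(n/(2\ell_n))^k\to(2d)^{-k}$, not $(1/2)^k$: the relevant fraction is of the $\ell_n\sim dn$ half-edge positions, not of the vertices.)

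Second, the inference from ``$\sum_{k\geq3}k n_k/\ell_n$ is bounded away from $0$'' to ``the expected number of surviving degree-$\geq3$ vertices is $\geq cn$'' is a non sequitur, and it is exactly where the missing second moment bites: the survival probability $(2d)^{-k}$ decays geometrically in $k$, so if the mass of $\{v:d_v\geq3\}$ sat on degrees growing with $n$, the sum $\sum_{k\geq3}n_k(2d)^{-k}$ could be $o(n)$ even though $\sum_{k\geq 3}kn_k=\Theta(n)$. You flag the unbounded-degree issue but do not resolve it at this point. The fix is to restrict attention to a single fixed $k\geq3$ with $p_k=\lim n_k/n>0$ --- such a $k$ exists because $n_0=0$, $n_1=o(n)$ and $p_2<1$ force $\P(D\geq3)=1-p_2>0$ --- and only count degree-$k$ vertices. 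This is what the paper does, obtaining a stochastic lower bound by a ${\sf Bin}(\Theta(n),(4d)^{-k})$ variable via sequential revealing of the half-edge positions, after which binomial concentration (or McDiarmid on the deterministic functional, which indeed has bounded differences $2$ under transpositions, as you propose) yields the $\e^{-cn}=o(n^{-1})$ tail. With these two repairs your argument coincides with the paper's proof.
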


\proof Let us consider the exploration from a permutation $\xi$ of the set of the half-edges chosen uniformly at random, as described above \eqref{St-rec}. We call $\mathcal T _{n/2} (\xi)$ the set of vertices such that all their half-edges  are  among the last $n/2$ of the permutation $\xi$. The previous definitions imply that $\mathcal T _{n/2} (\xi) \subseteq \Ncal_{T_{1/2}}$.

We now pick a $k>2$ such that $p_k=\lim n_k/n >0$, from the definition of the connectivity critical window we know that such $k$ exists. We want to find a lower bound on $N_{T_{1/2}}(k)= \# \{ v \in \Ncal_{T_{1/2}}\colon d_v =k\} \geq \# \{ v \in \mathcal T _{n/2} (\xi) \colon d_v =k\}$. 

Before running the exploration, we sequentially locate the half-edges of the vertices of degree $k$ in $\xi$. We stop this process once we have examined $n/(4k)$ vertices, or when we run out of vertices of degree $k$. We define the $\sigma$-algebra ${\mathscr F}^k_i$ generated by the positions of the half-edges of the first $i$ vertices that we have examined. We then find that, at each step $j$, thanks to the stopping conditions, there are still at least $n/4$ available spots among the last $n/2$ half-edges in $\xi$, so that
	\begin{equation}
	\P (v_j \in \mathcal T _{n/2} (\xi) \mid {\mathscr F}^k_{j-1} ) \geq \left( \dfrac{n}{4 \ell_n}\right)^k .
	\end{equation}
We know that $\lim_{n \to \infty} \left( \dfrac{n}{4 \ell_n}\right)^k = \big(1/4d\big)^k\equiv q_k$, so that
	\begin{equation}
	N_{T_{1/2}}(k) \overset{st}{\geq}  {\sf Bin} \Big(\Big(p_k \wedge \dfrac{1}{4k}\Big) n, q_k\Big),
	\end{equation}
	where $\overset{st}{\geq}$ indicates stochastic domination.
By concentration of the binomial distribution (see e.g., \cite{ArrGor89}), there exists a $c=c(a,q_k)$ such that, uniformly in $n$,
	\begin{equation}\label{bin}
	\P \left({\sf Bin} \left(an, q_k\right) \leq \dfrac{an}{2}q_k\right)\leq \e^{-cn} = o(n^{-1}).
	\end{equation}
The claim follows by picking $\varepsilon < \dfrac{1}{2} \Big(p_k \wedge \dfrac{1}{4k}\Big)q_k$. \qed
\bigskip

We notice that $S_{t+1}^{\sss(v)} -S_t ^{\sss(v)}<0$ only when one of the following events occurs:

\begin{itemize}
\item[$\rhd$] $A(t)=\{d_{v(e_2(t))}=1\}$, where $e_2(t)$ is the half-edge to which the $t$th paired half-edge is paired. In this case $S_{t+1}^{\sss(v)} -S_t^{\sss(v)} = -1$. Thanks to Lemma \ref{dom}, if we define ${\mathscr F}_k$ as the $\sigma$-algebra generated by the first $k$ steps of the exploration, then, uniformly for $t \leq T_{1/2}$,
	\eqn{
	\label{A(t)-bd}
	\P (A(t)\mid {\mathscr F}_{t-1}) \leq \dfrac{2\rho_1}{ \sqrt{n}}.
	}
\item[$\rhd$] $B(t)=\{e_2(t) \in \Acal_t\}$, where $e_2(t)$ is the half-edge to which the $t$th paired half-edge is paired. In this case $S_{t+1}^{\sss(v)} -S_t^{\sss(v)} = -2$. From the description of the exploration, we obtain that,
uniformly for $t \leq T_{1/2}$, 
	\eqn{
	\label{B(t)-bd}
	\P (B(t)| {\mathscr F}_{t-1}) \leq \dfrac{S_t^{\sss(v)}-1}{\ell_n -t-1} \leq \dfrac{2 S_t^{\sss(v)}}{n}.
	}
\end{itemize}

Now we prove three lemmas that together will yield Proposition \ref{surv}.

The first lemma contains a lower bound on the survival time of the process. Indeed, we show that \whp{} the component of $v$ is at least of polynomial size with respect to $n$.

\begin{lem}[No early hit of zero] 
\label{1/8}
Let $\CMd$ be in the connectivity critical window defined in Condition \ref{cond-crit-conn}. Then,
	\begin{equation}
	\P ( \exists t \leq n^{1/8}\colon S_t ^{\sss(v)}=0) = o(n^{-1}).
	\end{equation}
\end{lem}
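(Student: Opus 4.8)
The plan is to show that the exploration process $S_t^{\sss(v)}$ cannot reach $0$ before time $n^{1/8}$, by arguing that in this very early regime the process behaves essentially like an increasing random walk: the only way $S_t^{\sss(v)}$ can decrease is through the events $A(t)$ and $B(t)$ identified above, and before time $n^{1/8}$ both are extremely unlikely to occur even once. First I would observe that, starting from $S_0^{\sss(v)}=d_v\geq 3$, for $S_t^{\sss(v)}$ to hit $0$ at some time $t\leq n^{1/8}$ we need the total number of decreasing steps up to time $t$ to be at least $3/2$ (each decrease being of size $1$ or $2$), and in particular at least one event among $A(s)\cup B(s)$ for $s\leq n^{1/8}$ must occur — in fact at least two such events, but one already suffices for a crude bound. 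So it is enough to bound $\P\big(\exists t\leq n^{1/8}\colon A(t)\cup B(t)\big)$.

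The key steps, in order: (i) Condition on the high-probability event from Lemma~\ref{dom}, i.e. that $\#\{v\in\Ncal_{T_{1/2}}\colon d_v\geq 3\}\geq\varepsilon n$, which fails with probability $o(n^{-1})$ and can therefore be absorbed into the error term; on this event the bounds \eqref{A(t)-bd} and \eqref{B(t)-bd} are valid for all $t\leq T_{1/2}$. (ii) Note that as long as no decreasing step has occurred and $t\leq n^{1/8}$, we have $S_t^{\sss(v)} = d_v + \sum_{s<t}(d_{v(e_2(s))}-2)$; a crude deterministic bound gives $S_t^{\sss(v)}\leq d_v + \sum_{s<t} d_{v(e_2(s))}$, and since we explore at most $n^{1/8}$ edges we can bound $S_t^{\sss(v)}$ by something like the sum of $O(n^{1/8})$ i.i.d.-like degrees — but actually a cleaner route is just to use that before the first decreasing step, $S_t^{\sss(v)}\leq \ell_n$ trivially, and plug the worst case into \eqref{B(t)-bd}; this is too lossy. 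Instead I would control the running maximum of $S_t^{\sss(v)}$ up to time $n^{1/8}$: since $S_{t+1}^{\sss(v)}-S_t^{\sss(v)}\leq D_{\max}$ where $D_{\max}=\max_i d_i$, and by Condition~\ref{cond-crit-conn}(1) together with $\E[D_n]\to d<\infty$ one has $D_{\max}=o(n^{1/2})$ (indeed $n_1=O(\sqrt n)$ and finite limiting degrees force $D_{\max}$ small), we get $\sup_{t\leq n^{1/8}} S_t^{\sss(v)} \leq n^{1/8} D_{\max} = o(n^{5/8})$ — still not obviously good enough against \eqref{B(t)-bd}. The honest fix: use $\P(B(t)\mid{\mathscr F}_{t-1})\leq 2S_t^{\sss(v)}/n$ with $S_t^{\sss(v)}\leq C n^{1/8}\cdot n^{o(1)}$ once we also know (via a first-moment / Markov argument on $\sum_{s<t} d_{v(e_2(s))}$, whose conditional mean is $O(1)$ per step since $\E[D_n]\to d$) that $\sup_{t\leq n^{1/8}}S_t^{\sss(v)} = \Op(n^{1/8})$, and more precisely that $\P(\sup_{t\leq n^{1/8}} S_t^{\sss(v)} > n^{1/4}) = o(n^{-1})$ by a union bound over $t$ combined with a large-deviation bound on the partial sums of the explored degrees. (iii) On the event $\{\sup_{t\leq n^{1/8}}S_t^{\sss(v)}\leq n^{1/4}\}$, a union bound gives
\[
\P\Big(\bigcup_{t\leq n^{1/8}} \big(A(t)\cup B(t)\big)\Big) \leq \sum_{t\leq n^{1/8}}\Big(\frac{2\rho_1}{\sqrt n} + \frac{2 n^{1/4}}{n}\Big) \leq n^{1/8}\cdot\frac{C}{n^{3/4}} = O(n^{-5/8}) = o(n^{-1})\ ?
\]
— here I must be careful: $n^{1/8}\cdot n^{-1/2} = n^{-3/8}$, which is \emph{not} $o(n^{-1})$. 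This reveals that a single-event union bound over $A(t)$ is insufficient, and the real argument must exploit that hitting zero requires \emph{at least two} decreasing steps (since $S_0^{\sss(v)}\geq 3$ and each step decreases by at most $2$): the probability of two $A/B$-events among the first $n^{1/8}$ steps is bounded by $\binom{n^{1/8}}{2}\big(\tfrac{2\rho_1}{\sqrt n} + \tfrac{2n^{1/4}}{n}\big)^2 = O(n^{1/4}\cdot n^{-1}) = O(n^{-3/4})$ — still not $o(n^{-1})$.

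This computation exposes where the actual difficulty lies, and I expect this to be \textbf{the main obstacle}: the crude union bound loses too much, and one genuinely needs to use that $S_0^{\sss(v)}=d_v\geq 3$ \emph{and} that decreasing steps of size $1$ (the $A(t)$ events, which are the dangerous frequent ones, with probability $\sim n^{-1/2}$) require \emph{three} of them to kill the process, or a mixture — so the dominant term is $\binom{n^{1/8}}{3}(2\rho_1/\sqrt n)^3 = O(n^{3/8}\cdot n^{-3/2}) = O(n^{-9/8}) = o(n^{-1})$, which finally works, while the contribution of the $B(t)$ events and of mixed patterns is smaller still (each $B(t)$ costs an extra factor $n^{1/4}/n = n^{-3/4} \ll n^{-1/2}$, so replacing any $A$ by a $B$ only helps). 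So the correct structure of the proof is: reduce hitting zero by time $n^{1/8}$ to the occurrence of at least three "bad" steps (at least three $A$-events, or at least two $A$-events plus structure, or at least one $B$-event plus one $A$-event, etc., all of which I enumerate), bound each pattern by a product of the per-step conditional probabilities \eqref{A(t)-bd}, \eqref{B(t)-bd} using the tower property and the a-priori control $\sup_{t\leq n^{1/8}}S_t^{\sss(v)}\leq n^{1/4}$ w.h.p., and check that every pattern contributes $o(n^{-1})$. Finally I would assemble the pieces: the failure of Lemma~\ref{dom}'s event, the failure of the a-priori bound on $\sup S_t^{\sss(v)}$, and the bad-pattern probabilities, are each $o(n^{-1})$, so their union is too, proving the lemma.
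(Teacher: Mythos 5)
Your final structure --- enumerate patterns of decreasing steps whose total decrement is at least $3$, and bound each pattern by a product of the conditional probabilities \eqref{A(t)-bd} and \eqref{B(t)-bd} times a combinatorial factor --- is the paper's structure, and your computation for the all-$A$ pattern $\binom{n^{1/8}}{3}(2\rho_1/\sqrt n)^3=O(n^{-9/8})$ matches the paper's bound on $F_1$. However, there is a genuine gap in your treatment of the $B$-events. You propose to control $\P(B(t)\mid{\mathscr F}_{t-1})\leq 2S_t^{\sss(v)}/n$ via an a-priori bound $\sup_{t\leq n^{1/8}}S_t^{\sss(v)}\leq n^{1/4}$, giving $\P(B(t))\lesssim n^{-3/4}$. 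With that bound the mixed pattern (one $A$-step plus one $B$-step, total decrement $3$) contributes
\begin{equation*}
\binom{n^{1/8}}{2}\cdot\frac{2\rho_1}{\sqrt n}\cdot\frac{2n^{1/4}}{n}=\Theta(n^{-1}),
\end{equation*}
which is \emph{not} $o(n^{-1})$ and is in fact larger than your claimed dominant term $n^{-9/8}$. Your heuristic ``replacing any $A$ by a $B$ only helps'' is false here: a single $B$ replaces \emph{two} units of decrement that would otherwise require two $A$-steps, so it removes a factor $n^{1/8}\cdot n^{-1/2}\cdot n^{1/8}\cdot n^{-1/2}=n^{-3/4}$ from the union bound while adding only $n^{1/8}\cdot n^{-3/4}=n^{-5/8}$, i.e.\ the mixed pattern dominates. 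Since the lemma must be summed over $n$ starting vertices, an $O(n^{-1})$ bound is exactly not good enough.

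The missing idea, which is how the paper's proof of Lemma \ref{1/8} closes this hole, is a localization constraint: since the walk moves down by at most $2$ per step, the decreasing steps that actually bring $S_t^{\sss(v)}$ from $d_v\geq 3$ to $0$ must occur at times when $S_{s_i}^{\sss(v)}\leq 3$ (for patterns containing an $A$-step) or $\leq 4$ (for two $B$-steps). Building this constraint into the events (the paper's $F_1,F_2,F_3$) turns \eqref{B(t)-bd} into $\P(B(s_i),\,S_{s_i}^{\sss(v)}\leq 4\mid{\mathscr F}_{s_i-1})\leq 8/n$, after which the mixed pattern is $O(n^{1/4}\cdot n^{-1/2}\cdot n^{-1})=O(n^{-5/4})=o(n^{-1})$ and the double-$B$ pattern is $O(n^{-7/4})$. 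This also makes your step (ii) unnecessary, which is fortunate, because your claim $\P(\sup_{t\leq n^{1/8}}S_t^{\sss(v)}>n^{1/4})=o(n^{-1})$ via a large-deviation bound on the explored degrees is not justified under Condition \ref{cond-crit-conn}: only $\E[D_n]\to d$ is assumed, the explored degrees are size-biased, and no second-moment or maximum-degree control is available, so the partial sums need not concentrate at the $o(n^{-1})$ level you require.
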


\proof For the process to die out before time $n^{1/8}$ it has to make at least $3$ steps down while it is very low. We can characterize the ways in which this can happen in terms of occurrences of the events $A(t)$ (finding a degree $1$ vertex) and $B(t)$ (finding an already active half-edge).

 We thus need one of the following three events to occur:
	\[\begin{split}
	F_1 =\bigcup_{s_1,s_2,s_3 \leq n^{1/8}}& A(s_1) \cap A(s_2) \cap A (s_3) \cap \{ S_{s_1}^{\sss(v)},S_{s_2}^{\sss(v)},S_{s_3}^{\sss(v)} \leq 3 \}, \\ 
	F_2=\bigcup_{s_1,s_2 \leq n^{1/8}}&  A(s_1) \cap B (s_2) \cap \{ S_{s_1}^{\sss(v)},S_{s_2}^{\sss(v)} \leq 3 \},  \\
	F_3=\bigcup_{s_1,s_2 \leq n^{1/8}}&  B(s_1) \cap B (s_2) \cap \{ S_{s_1}^{\sss(v)},S_{s_2}^{\sss(v)} \leq 4 \}.
	\end{split}
	\]
We estimate using \eqref{A(t)-bd} and \eqref{B(t)-bd} to obtain
	\begin{align}
	\P (F_1) &\leq {{n^{1/8}}\choose{3}} \left(\dfrac{2\rho_1}{ \sqrt{n}}\right)^3 \leq \dfrac{4\rho_1^3}{3} \dfrac{n^{3/8}}{n^{3/2}}= o(n^{-1}), \\
	\P (F_2) &\leq {{n^{1/8}}\choose{2}} \dfrac{2\rho_1}{ \sqrt{n}} \dfrac{6}{n}\leq \dfrac{3 \rho_1}{2}\dfrac{n^{1/4}}{n^{3/2}}= o(n^{-1}),\\
	\P (F_3) &\leq {{n^{1/8}}\choose{2}} \left( \dfrac{8}{n} \right)^2 \leq 32\dfrac{n^{1/4}}{n^{2}}= o(n^{-1}).
	\end{align}
Applying the union bound proves the claim. 
\qed
\medskip

The next lemma proves instead that when the process is sufficiently low, it is very unlikely to decrease further, since we have few active half-edges to create loops with.

\begin{lem}[Unlikely to dip even lower]
\label{down6}
Let  $\CMd$ be in the connectivity critical window  defined in Condition \ref{cond-crit-conn}. Fix $v$ such that $d_v \geq 3$. Then, for every $t \leq T_{1/2}$ and $\gamma > 0$,
	\begin{equation}\label{o-2}
	\P \Big( \sum_{i \leq \gamma n^{1/8}} (S_{t+i+1}^{\sss(v)}-S_{t+1}^{\sss(v)} )\indic_{S_{t+i+1}^{\sss(v)}<S_{t+i}^{\sss(v)}<3\gamma n^{1/8}}\geq 6\Big) = o(n^{-2}).
	\end{equation}
\end{lem}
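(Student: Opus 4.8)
The plan is to show that over the window of length $\gamma n^{1/8}$ following time $t$, the event in \eqref{o-2} forces at least three ``downward'' steps to happen while the process is below $3\gamma n^{1/8}$, and each such step is rare because the conditional probabilities \eqref{A(t)-bd} and \eqref{B(t)-bd} are both $O(n^{-1/2})$ in this regime. Concretely, a single downward step contributes either $-1$ (an $A$-event) or $-2$ (a $B$-event) to $S^{\sss(v)}$, so to accumulate a total decrease of at least $6$ while the constraint $S_{t+i}^{\sss(v)} < 3\gamma n^{1/8}$ is in force requires at least three downward steps among the first $\gamma n^{1/8}$ steps after time $t$, each occurring at a time when $S^{\sss(v)} < 3\gamma n^{1/8}$. (One has to be slightly careful that the sum in \eqref{o-2} telescopes to $S_{t+i+1}^{\sss(v)}-S_{t+1}^{\sss(v)}$ only over indices where the process went down; writing it as a sum over individual increments weighted by the indicator of a decrease is exactly a sum of the $-1$'s and $-2$'s, so reaching $-6$ needs at least three such terms.)

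First I would fix the target event and bound it by a union over triples $i_1 < i_2 < i_3 \le \gamma n^{1/8}$ of the event that a downward step of the exploration occurs at each of times $t+i_1, t+i_2, t+i_3$ while $S^{\sss(v)}$ is below $3\gamma n^{1/8}$ at those times. For a fixed such triple, I would condition successively on ${\mathscr F}_{t+i_j - 1}$ and use that, on the event $S_{t+i_j}^{\sss(v)} < 3\gamma n^{1/8}$,
\[
\P(A(t+i_j) \cup B(t+i_j) \mid {\mathscr F}_{t+i_j-1}) \le \frac{2\rho_1}{\sqrt n} + \frac{2 S_{t+i_j}^{\sss(v)}}{n} \le \frac{2\rho_1}{\sqrt n} + \frac{6\gamma n^{1/8}}{n} = O(n^{-1/2}),
\]
by \eqref{A(t)-bd} and \eqref{B(t)-bd} (valid uniformly for $t+i_j \le T_{1/2}$, which holds since $T_{1/2}$ is the relevant horizon). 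Multiplying the three conditional bounds gives $O(n^{-3/2})$ for each triple.

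Finally I would sum over all triples: there are at most $\binom{\gamma n^{1/8}}{3} = O(n^{3/8})$ of them, so the union bound yields a total of $O(n^{3/8} \cdot n^{-3/2}) = O(n^{-9/8}) = o(n^{-2})$ — wait, $n^{3/8 - 3/2} = n^{-9/8}$, which is only $o(n^{-1})$, not $o(n^{-2})$; to get $o(n^{-2})$ one should instead observe that in fact the events $A(s)$ carry the factor $\rho_1/\sqrt n$ and at least two of the three downward steps must be genuinely ``costly'' in the sense that even the $B$-contribution $S^{\sss(v)}/n$ is $o(n^{-1/2})$ up to the $n^{1/8}$ correction, so each triple is $O(n^{-3/2})$ and more carefully $O(n^{-1/2} \cdot n^{-1/2} \cdot n^{-1/2})$ with room to spare; combining with the polynomial count one can instead take a shorter effective window or note the three times must be distinct and ordered, giving $o(n^{-2})$ after absorbing constants. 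The cleanest route is to bound $\P(\text{triple})\le (Cn^{-1/2})^3 = Cn^{-3/2}$ and then note $\binom{\gamma n^{1/8}}{3}\le C\gamma^3 n^{3/8}$, so the product is $C'n^{-9/8}$; since this must match the claimed $o(n^{-2})$, the intended argument surely uses that a decrease to total $-6$ while staying below $3\gamma n^{1/8}$ actually requires more downward steps (e.g. the process must both reach the low region and then drop further), effectively a fourth or fifth rare step, pushing the exponent past $-2$.

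**The main obstacle.** The real subtlety is precisely this bookkeeping: extracting from ``total decrease $\ge 6$ while below $3\gamma n^{1/8}$'' the correct number of independent rare events so that the union bound beats $n^{-2}$. Once the right combinatorial count of forced rare steps is pinned down (I expect it to be at least five $A/B$-events, or a mix whose probabilities multiply to $o(n^{-2}\cdot n^{-3/8})$), the rest is the routine successive-conditioning estimate above. I would therefore spend the bulk of the argument carefully decomposing the event \eqref{o-2} into a union over tuples of step-times carrying enough rare-event factors, and only then apply \eqref{A(t)-bd}–\eqref{B(t)-bd} and the union bound.
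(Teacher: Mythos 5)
There is a genuine gap: your union bound over triples of ``downward steps,'' each bounded uniformly by $O(n^{-1/2})$, yields $O(n^{3/8}\cdot n^{-3/2})=O(n^{-9/8})$, which is $o(n^{-1})$ but not the required $o(n^{-2})$ --- as you yourself notice --- and the speculative fix at the end (``a fourth or fifth rare step'') is never actually substantiated. The missing idea is that the two types of downward step must \emph{not} be lumped together under the common bound $O(n^{-1/2})$. An $A$-event (hitting a degree-one vertex) has conditional probability at most $2\rho_1/\sqrt n$ but decreases $S^{\sss(v)}$ by only $1$; a $B$-event (pairing to an active half-edge) decreases $S^{\sss(v)}$ by $2$, but on the event $S_{t+i}^{\sss(v)}<3\gamma n^{1/8}$ its conditional probability is at most $6\gamma n^{1/8}/n=O(n^{-7/8})$ by \eqref{B(t)-bd} --- much smaller than $n^{-1/2}$. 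This is exactly why the indicator in \eqref{o-2} restricts to times when the process is below $3\gamma n^{1/8}$.

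With this separation, a total decrease of $6$ forces $a$ $A$-events and $b$ $B$-events with $a+2b\ge 6$, and the union bound over the $\binom{\gamma n^{1/8}}{a+b}$ choices of times gives, for each minimal pattern $(a,b)$ with $a+2b=6$,
\begin{equation*}
O\Big(n^{(a+b)/8}\cdot n^{-a/2}\cdot n^{-7b/8}\Big)=O\big(n^{-3(a+2b)/8}\big)=O\big(n^{-9/4}\big)=o(n^{-2}),
\end{equation*}
uniformly over the four cases $(a,b)\in\{(6,0),(4,1),(2,2),(0,3)\}$; these are precisely the events $G_1,\dots,G_4$ in the paper's proof. Note that the threshold $6$ is not arbitrary: it is the smallest integer $m$ with $3m/8>2$, so your instinct that ``more rare steps'' are needed than three was pointing in the right direction, but the resolution is the refined bound on $B$-events together with the case analysis on how the decrement $6$ is composed, not a larger count of undifferentiated steps.
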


\proof As in the proof of Lemma \ref{1/8}, we find some events that must occur in order that the event in the left-hand side of \eqref{o-2} occurs. Again we express the possible ways in which the required $6$ steps down can occur in terms of the events $A(t)$ and $B(t)$. We start by introducing some notation. For $1\leq i<j$ and $s_i\geq 0$, we write $A_{[i,j]}(t)=A(t+s_i) \cap \dots \cap A(t+s_j), B_{[i,j]}(t)=B(t+s_i) \cap \dots \cap B(t+s_j)$. Then, for the event in the left-hand side of \eqref{o-2} to occur, we need that one of the following events occurs:
	\[
	\begin{split}
	G_1=\bigcup_{s_1,\ldots,s_6 \leq \gamma n^{1/8}}& A_{[1,6]}(t) \cap \bigcap_{i \leq 6 } \{ S_{t+s_i }^{\sss(v)}\leq 3\gamma n^{1/8}\},\\
	G_2=\bigcup_{s_1,\ldots,s_5 \leq \gamma n^{1/8}}& A_{[1,4]}(t) \cap B(t+s_5) \cap \bigcap_{i \leq 5} \{ S_{t+s_i}^{\sss(v)}\leq 3\gamma n^{1/8} \},\\
	G_3=\bigcup_{s_1,\dots ,s_4 \leq \gamma n^{1/8}}& A_{[1,2]}(t)\cap B_{[3,4]} \cap \bigcap_{i \leq 4}  \{ S_{t+s_i}^{\sss(v)}\leq 3\gamma n^{1/8}\},\\
	G_4=\bigcup_{s_1,\ldots, s_3 \leq \gamma n^{1/8}}& B_{[1,3]}(t)\cap \bigcap_{i\leq 3}  \{ S_{t+s_i}^{\sss(v)}\leq 3\gamma n^{1/8}\}.
	\end{split}
	\]
Again we estimate using \eqref{A(t)-bd} and \eqref{B(t)-bd} to obtain
	\begin{align}
	\P (G_1) \leq &{{\gamma n^{1/8}}\choose{6}}\left(\dfrac{2\rho_1}{ \sqrt{n}}\right)^6 \leq \dfrac{2^6\gamma^6 \rho_1^6}{6!} \dfrac{ n^{6/8}}{n^3}
	= o(n^{-2}),\\
	\P (G_2) \leq &{{\gamma n^{1/8}}\choose{5}}\left(\dfrac{2\rho_1}{ \sqrt{n}}\right)^4 
	\dfrac{6\gamma n^{1/8}}{n} \leq \dfrac{2^53\gamma \rho_1^4\gamma^6}{5!} \dfrac{ n^{6/8}}{n^3}
	= o(n^{-2}),\\
	\P (G_3) \leq &{{\gamma n^{1/8}}\choose{4}}\left(\dfrac{2\rho_1}{ \sqrt{n}}\right)^2 \left( \dfrac{6\gamma n^{1/8}}{n}\right)^2
	\leq \dfrac{2^43^2\gamma^6 \rho_1^2}{4!} \dfrac{ n^{6/8}}{n^3}
	= o(n^{-2}),\\
	\P (G_4) \leq &{{\gamma n^{1/8}}\choose{3}}\left( \dfrac{6\gamma n^{1/8}}{n}\right)^3 \leq \dfrac{6^3\gamma^6 }{3!} \dfrac{ n^{6/8}}{n^3}
	= o(n^{-2}).
	\end{align}
Applying the union bound proves the claim. 
\qed
\bigskip

We now show that not only the exploration survives up to time $n^{1/8}$ but also we have a quite large number of active half-edges.

\begin{lem}[Law of large numbers lower bound on exploration]
\label{many}
Fix $v$ such that $d_v \geq 3$. The exploration on $\CMd$ in the connectivity critical window defined in Condition \ref{cond-crit-conn} satisfies that there exists a $\gamma >0$ such that
	\begin{equation}
	\P (S_{n^{1/8}}^{\sss(v)} < 2\gamma n^{1/8}  ) =o(n^{-1}).
	\end{equation}
\end{lem}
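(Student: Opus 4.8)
The plan is to show that the exploration process $S^{\sss(v)}_t$, started from $S_0^{\sss(v)}=d_v\geq 3$, not only survives up to time $n^{1/8}$ but has grown to order $n^{1/8}$ by then. The intuition is that each step of the exploration adds, on average, roughly $\E[D_n]-$(something) $>1$ new active half-edges when $e_2(t)\in\Ncal_t$ (because $\nu_n>1$ forces $\E[D_n(D_n-1)]/\E[D_n]>1$, so the expected forward degree exceeds $1$), while the ``bad'' events $A(t)$ and $B(t)$ that decrease $S^{\sss(v)}_t$ are rare: by \eqref{A(t)-bd} the event $A(t)$ has conditional probability $O(n^{-1/2})$, and by \eqref{B(t)-bd} the event $B(t)$ has conditional probability $O(S_t^{\sss(v)}/n)=O(n^{-7/8})$ on the relevant time scale $t\leq n^{1/8}$ (as long as $S_t^{\sss(v)}$ stays $O(n^{1/8})$). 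So over $n^{1/8}$ steps the expected total downward correction from $A$ and $B$ events is $O(n^{1/8}\cdot n^{-1/2})=o(1)$, negligible compared with the target $\Theta(n^{1/8})$.

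The key steps, in order, would be: (i) Condition on the event from Lemma \ref{1/8} that $S^{\sss(v)}_t>0$ for all $t\leq n^{1/8}$ and on the event from Lemma \ref{dom} that $\#\{v\in\Ncal_{T_{1/2}}:d_v\geq 3\}\geq\varepsilon n$, both of which hold with probability $1-o(n^{-1})$; in particular $T_{1/2}\gg n^{1/8}$, so all times $t\leq n^{1/8}$ are legitimately in the regime where \eqref{A(t)-bd}--\eqref{B(t)-bd} apply. (ii) Write $S^{\sss(v)}_{n^{1/8}} = d_v + \sum_{t<n^{1/8}}(S^{\sss(v)}_{t+1}-S^{\sss(v)}_t)$ and split the increments according to whether $e_2(t)\in\Ncal_t$ (increment $d_{v(e_2(t))}-2$) or $e_2(t)\in\Acal_t$ (increment $-2$). (iii) Lower bound the number of ``neutral'' steps among the first $n^{1/8}$: since $\P(B(t)\mid\mathscr F_{t-1})\leq 2S^{\sss(v)}_t/n$, on the event $\{S^{\sss(v)}_t\leq 3\gamma n^{1/8}\ \forall t\leq n^{1/8}\}$ the number of $B$-steps is stochastically dominated by a Binomial with mean $O(n^{1/8}\cdot n^{1/8}/n)=o(1)$, hence is $0$ whp with the required $o(n^{-1})$ error (a crude union bound over $n^{1/8}$ steps suffices, or a Markov bound on the sum). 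Similarly the number of $A$-steps among the first $n^{1/8}$ is stochastically dominated by $\mathsf{Bin}(n^{1/8},2\rho_1/\sqrt n)$, which is $o(n^{1/8})$ and in fact $\leq 1$ with probability $1-o(n^{-1})$, so those steps contribute a bounded correction. (iv) On the good event, essentially all $n^{1/8}-O(1)$ steps are neutral steps, and the sum of the forward degrees $\sum (d_{v(e_2(t))}-2)$ over these steps is, by a law-of-large-numbers / concentration argument for sampling half-edges without replacement from the neutral pool, at least $(\nu_n-1-o(1))\cdot n^{1/8}$; here one uses that the pool of neutral half-edges has not been depleted (Lemma \ref{dom} guarantees a linear fraction of degree-$\geq 3$ vertices remain, so the empirical forward-degree distribution of the neutral half-edges is close to the size-biased $D_n$), and that $\nu_n$ is bounded away from $1$ by the computation in the outline. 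Choosing $\gamma$ small enough, say $\gamma<\tfrac14(\nu-1)$, then gives $S^{\sss(v)}_{n^{1/8}}\geq 2\gamma n^{1/8}$ with probability $1-o(n^{-1})$, and one checks a posteriori that the a priori bound $S^{\sss(v)}_t\leq 3\gamma n^{1/8}$ used in step (iii) can be replaced by the trivial deterministic bound $S^{\sss(v)}_t\leq d_v+2t\leq 3\gamma n^{1/8}$ for $t\leq n^{1/8}$ once $n$ is large (since increments are at most $\max$ degree $-2$... — more carefully, one runs the argument with a stopping time at the first $t$ where $S^{\sss(v)}_t$ exceeds $3\gamma n^{1/8}$, and shows that stopping time exceeds $n^{1/8}$ whp, or simply notes that exceeding the target early is even better).

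The main obstacle I expect is step (iv): making precise the claim that the forward degrees encountered in the first $n^{1/8}$ neutral steps have empirical average bounded below by $\nu_n-1-o(1)$ with failure probability $o(n^{-1})$. Since $n^{1/8}\to\infty$ this is a law-of-large-numbers statement, but the half-edges are sampled without replacement and the summands $d_{v(e_2(t))}$ are unbounded in general (only $\E[D_n]$ and $\E[D_n(D_n-1)]$ are controlled, via Condition \ref{cond-crit-conn} and the hypothesis of Theorem \ref{poism}, but here we only assume the critical-window condition, so $\nu_n$ could tend to $\infty$ — in which case the bound is only easier, as the drift is even larger). The cleanest route is probably to truncate: it suffices to show that the number of neutral steps among the first $n^{1/8}$ at which $d_{v(e_2(t))}\geq 3$ is at least $c\, n^{1/8}$ for some constant $c>0$, which follows from Lemma \ref{dom} (a linear fraction of the neutral half-edges belong to degree-$\geq 3$ vertices, each such step contributes at least $+1$ to $S^{\sss(v)}$) together with a concentration bound (Azuma/Chernoff for the count of ``good'' steps, which is a sum of conditionally-$\mathsf{Bernoulli}(\Theta(1))$ variables). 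This avoids unbounded summands entirely and delivers $S^{\sss(v)}_{n^{1/8}}\geq c\,n^{1/8}-O(1)\geq 2\gamma n^{1/8}$ with $\gamma=c/3$, say, and failure probability $o(n^{-1})$.
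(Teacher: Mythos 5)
Your final, truncated version of the argument is essentially the paper's proof: Lemma \ref{dom} guarantees that, up to time $T_{1/2}$, each pairing finds a neutral vertex of degree at least $3$ with conditional probability at least some $\varepsilon>0$, so the number of $+1$-or-better steps stochastically dominates ${\sf Bin}(n^{1/8},\varepsilon)$, the downward steps contribute only an $O(1)$ correction, and Chernoff gives failure probability $\e^{-cn^{1/8}}=o(n^{-2})$; the paper never attempts the untruncated LLN with drift $\nu_n-1$ that you rightly identify as problematic. There is, however, one concrete quantitative error in your step (iii): the events ``at least one $B$-step before time $n^{1/8}$'' and ``at least two $A$-steps before time $n^{1/8}$'' have probability of order $n^{1/8}\cdot n^{-7/8}=n^{-3/4}$ and $n^{1/4}\cdot n^{-1}=n^{-3/4}$ respectively, which is \emph{not} $o(n^{-1})$, so you cannot conclude that there are zero $B$-steps or at most one $A$-step with the required error. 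The fix --- which is exactly what the paper's Lemma \ref{down6} does --- is to bound the \emph{total} downward drift accumulated while $S_t^{\sss(v)}<3\gamma n^{1/8}$ by a fixed constant (the paper uses $6$): one enumerates the combinations of $A$- and $B$-events needed to exceed that constant (six $A$'s, four $A$'s and one $B$, two $A$'s and two $B$'s, or three $B$'s) and checks each has probability $o(n^{-2})$. The restriction to times at which $S_t^{\sss(v)}<3\gamma n^{1/8}$ also resolves your loose remark that ``exceeding the target early is even better'': it is not automatic, since the process could in principle come back down, but once it has reached $3\gamma n^{1/8}$ it would have to lose more than $6$ while inside the band $[0,3\gamma n^{1/8})$ to end below $2\gamma n^{1/8}$, and that is precisely what Lemma \ref{down6} rules out. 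With these repairs your plan coincides with the paper's two-case argument and the choice $\gamma<\varepsilon/4$.
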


\proof We divide the proof into two cases:
\begin{enumerate}
\item There exists $t < n^{1/8}$ such that $S_t^{\sss(v)} \geq 3\gamma n^{1/8}$. In this case, fix $n$ so large that $3\gamma n^{1/8} -6 \geq 2\gamma n^{1/8}$.
Then, note that in order for $S_{n^{1/8}}^{\sss(v)} < 2\gamma n^{1/8}$ to occur and since $S_{t+1}^{\sss(v)}-S_{t}^{\sss(v)}\geq -2$, we must have that
$\sum_{i \leq \gamma n^{1/8}} (S_{t+i+1}^{\sss(v)}-S_{t+1}^{\sss(v)} )\indic_{\{S_{t+i+1}^{\sss(v)}<S_{t+i}^{\sss(v)}<3\gamma n^{1/8}\}}\geq 6$, which 
by Lemma \ref{down6} implies that $S_{n^{1/8}}^{\sss(v)} \geq 3\gamma n^{1/8} -6 \geq 2\gamma n^{1/8}$ has probability $o(n^{-2})$.

\item  $S_t^{\sss(v)} <3\gamma n^{1/8}$ for all $t \leq n^{1/8}$. In this case, we know from Lemma \ref{down6} that with probability $o(n^{-2})$ the sum of the down steps $(S_{t+i}^{\sss(v)}-S_{t+i+1}^{\sss(v)}) \indic_{\{S_{t+i+1}^{\sss(v)}<S_{t+i}^{\sss(v)}<3\gamma n^{1/8}\}}$ is at most $6$. Under this condition, we recall Lemma \ref{dom} and note that $d_{v_t}\geq 3$ with probability at least $\varepsilon$, for some $\varepsilon >0$, since $n^{1/8} \leq T_{1/2}$. Thus,
	\begin{equation}
	S_{n^{1/8}}^{\sss(v)}  \overset{st}{\geq}{\sf Bin} (n^{1/8}, \varepsilon)-6.
	\end{equation}
By concentration of the binomial distribution (see e.g., \cite{ArrGor89})
	\begin{equation}\label{bin}
	\P \Big({\sf Bin} (n^{1/8}, \varepsilon) \leq \dfrac{1}{2}\varepsilon n^{1/8}\Big)\leq \e^{-cn^{1/8}} = o(n^{-2}).
	\end{equation}
for sufficiently large $n$. The claim now follows by choosing $\gamma < \varepsilon / 4$.
\end{enumerate}
\qed
\bigskip

Now we know that at time $t=n^{1/8}$, with probability $1-o(n^{-1})$, $S_t^{\sss(v)} \geq 2 \gamma n^{1/8}$. This means that from that point onwards, we need at least $\gamma n^{1/8}$ steps for the process to die. Thus, the following lemma is needed to prove Proposition \ref{surv}.

\begin{lem}[Process does not go down too much]
\label{down}
Let  $\CMd$ be in the connectivity critical window defined in Condition \ref{cond-crit-conn}. Fix $v$ such that $d_v \geq 3$. 
Then, for every $\gamma > 0$,
	\begin{equation}
	\P (\exists t \in (n^{1/8},T_{1/2})\colon S_{t+\gamma n^{1/8}}^{\sss(v)}<S_{t}^{\sss(v)}< 3 \gamma n^{1/8}-6) = o(n^{-1}).
	\end{equation}
\end{lem}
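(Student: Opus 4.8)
The plan is to show that, uniformly over all starting times $t\in(n^{1/8},T_{1/2})$, a drop of the required magnitude in a window of length $\gamma n^{1/8}$ is extremely unlikely, and then to union bound over the at most $\ell_n/2 = O(n)$ possible values of $t$. The event $\{S_{t+\gamma n^{1/8}}^{\sss(v)}<S_t^{\sss(v)}<3\gamma n^{1/8}-6\}$ forces a net decrease of at least $1$ while the process stays below $3\gamma n^{1/8}$; more importantly, since $S$ decreases only at $A$-steps (by $1$) and $B$-steps (by $2$), and since in between such steps it can only increase, a genuine net drop while confined below the level $3\gamma n^{1/8}$ for $\gamma n^{1/8}$ consecutive steps forces many $A$- or $B$-steps to occur in that window --- this is exactly the mechanism already exploited in Lemma~\ref{down6}. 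Concretely, I would argue that if $S_{t+\gamma n^{1/8}}^{\sss(v)}<S_t^{\sss(v)}$ and $S_{t+i}^{\sss(v)}<3\gamma n^{1/8}$ throughout, then $\sum_{i\le\gamma n^{1/8}}(S_{t+i+1}^{\sss(v)}-S_{t+1}^{\sss(v)})\indi_{S_{t+i+1}^{\sss(v)}<S_{t+i}^{\sss(v)}<3\gamma n^{1/8}}\ge 6$ (after absorbing constants into the choice of $\gamma$ and using $-6$ in the level condition to give room), so that Lemma~\ref{down6} applies directly at that fixed $t$ and gives probability $o(n^{-2})$.

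Thus the key steps, in order, are: (i) fix $t\in(n^{1/8},T_{1/2})$ and observe that the event in the statement is contained in the event of Lemma~\ref{down6} with the same $t$ and $\gamma$ --- here one has to be slightly careful that the $-6$ slack in the level bound $3\gamma n^{1/8}-6$ together with the $\ge 6$ threshold in Lemma~\ref{down6} are consistent, but this is precisely why the constant $6$ and the shift $-6$ appear; (ii) invoke Lemma~\ref{down6} to conclude $\P(\cdot\mid t)=o(n^{-2})$ uniformly in $t\le T_{1/2}$; (iii) take a union bound over all $t\in(n^{1/8},T_{1/2})$, of which there are at most $\ell_n\le C n$ for some constant $C$ (using $\E[D_n]\to d<\infty$), giving total probability $C n\cdot o(n^{-2})=o(n^{-1})$, as claimed. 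One should also note that the conditional bounds \eqref{A(t)-bd} and \eqref{B(t)-bd} that power Lemma~\ref{down6} are valid uniformly for $t\le T_{1/2}$, courtesy of Lemma~\ref{dom}, so the estimate really is uniform in the starting time $t$.

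The main obstacle --- really the only subtle point --- is making the reduction in step~(i) airtight: one must check that confinement of the walk below $3\gamma n^{1/8}$ for a window of $\gamma n^{1/8}$ steps, combined with a net decrease, genuinely forces the weighted count of down-steps appearing in Lemma~\ref{down6} to reach $6$. The point is that the walk has increments in $\{-2,-1\}\cup\{d_{v(e_2)}-2:d_{v(e_2)}\ge 1\}$, so positive increments can be large, but each down-step contributes at most $-2$ to the sum on the left of \eqref{o-2}; to force a net drop after a long window the down-steps must accumulate, and the level restriction $S_{t+i}^{\sss(v)}<3\gamma n^{1/8}$ caps how much the up-steps can have contributed before a drop resets things --- this is standard but needs to be spelled out carefully. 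Once this combinatorial/deterministic reduction is in place, the probabilistic content is entirely supplied by Lemma~\ref{down6}, and the union bound over $O(n)$ times closes the argument.
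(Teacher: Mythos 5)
There is a genuine gap in step~(i): the claimed containment is false. The event $\{S_{t+\gamma n^{1/8}}^{\sss(v)}<S_{t}^{\sss(v)}<3\gamma n^{1/8}-6\}$ does \emph{not} force the weighted sum of down-steps in Lemma~\ref{down6} to reach $6$. The increments of $S^{\sss(v)}$ are not restricted to $\{-2,-1\}\cup\{\text{positive}\}$: pairing $e_2(t)$ to a neutral vertex of degree $2$ gives an increment of exactly $0$, and since $p_2$ may be positive such zero steps are typical. So the process can start at, say, $S_t^{\sss(v)}=10$, take a single $A$-step of $-1$, and otherwise move only by zero increments throughout the window; this realizes a net decrease with total down-step mass equal to $1$, not $6$. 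A net drop only forces the down-steps to exceed the up-steps, and your level-confinement observation caps the up-steps rather than forcing the down-steps to accumulate. Hence Lemma~\ref{down6} alone cannot deliver the conclusion.

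The missing ingredient is the \emph{positive drift} supplied by Lemma~\ref{dom}: since for $t\le T_{1/2}$ a proportion at least $\varepsilon$ of the neutral half-edge mass belongs to vertices of degree $\ge 3$, each step produces an increment of at least $+1$ with probability at least $\varepsilon$, whence $S_{t+\gamma n^{1/8}}^{\sss(v)}-S_t^{\sss(v)}\overset{st}{\geq}{\sf Bin}(\gamma n^{1/8},\varepsilon)-6$ on the event (of probability $1-o(n^{-2})$, by Lemma~\ref{down6}) that the down-steps below level $3\gamma n^{1/8}$ total at most $6$. Binomial concentration then makes the right-hand side positive with probability $1-o(n^{-2})$, and one also needs the separate (easy) case where the process exceeds $3\gamma n^{1/8}$ somewhere in the window, in which the $-6$ slack guarantees it cannot return below $S_t^{\sss(v)}$. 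With these two cases in place, your union bound over the $O(n)$ starting times is exactly right and closes the argument; without the binomial lower bound, the proof does not go through.
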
	
	
\proof
First fix $t\in (n^{1/8},T_{1/2})$. Again we split the proof into two parts:

\begin{enumerate}
\item There exists $i < \gamma n^{1/8}$ such that $S_{t+i}^{\sss(v)} \geq 3\gamma n^{1/8}$. In this case, we again know from Lemma \ref{down6} that $S_{t+\gamma n^{1/8}}^{\sss(v)} \geq 3\gamma n^{1/8} -6 \geq 2\gamma n^{1/8}$ with probability $1-o(n^{-2})$.
\item $S_{t+i}^{\sss(v)} <3\gamma n^{1/8}$ for all $t \leq \gamma n^{1/8}$. In this case we know from Lemma \ref{down6} that with probability $o(n^{-2})$ the sum of the down steps $(S_{t+i}^{\sss(v)}-S_{t+i+1}^{\sss(v)}) \indic_{\{S_{t+i+1}^{\sss(v)}<S_{t+i}^{\sss(v)}<3\gamma n^{1/8}\}}$ is at most $6$. Under this condition we can again write
	\begin{equation}
	S_{t+\gamma n^{1/8}}^{\sss(v)}-S_t ^{\sss(v)} \overset{st}{\geq} \mathrm{Bin} (n^{1/8}, \varepsilon)-6.
	\end{equation}
Formula \eqref{bin} proves that the probability that $S_{t+\gamma n^{1/8}}^{\sss(v)}<S_t ^{\sss(v)}$ is at most $o(n^{-2})$.
\end{enumerate}
The union bound implies that
	\begin{equation}
	\P (\exists t \in (n^{1/8},T_{1/2})\colon S_{t+\gamma n^{1/8}}^{\sss(v)}<S_{t}^{\sss(v)}< 3 \gamma n^{1/8}-6) \leq \ell_n \ o(n^{-2})= o(n^{-1}).
	\end{equation}
\qed
\bigskip

Now we are ready to complete the proof of Proposition \ref{surv}.

\begin{proof}[Proof of Proposition \ref{surv}] Lemmas \ref{1/8} and \ref{many} show that up to time $n^{1/8}$ the process is very unlikely 
to die and very likely to grow at least until polynomial size:
	\begin{equation}
	\P (T_0 > n^{1/8}, S_{n^{1/8}}^{\sss(v)}> 2\gamma n^{1/8}) = 1 - o(n^{-1}).
	\end{equation}
Now we define the sequence of random variables $Q_i = S_{(1+\gamma i)n^{1/8}}^{\sss(v)}$, so that $\P ( Q_0 < 2 \gamma n^{1/8}) = o(n^{-1})$. 
By Lemma \ref{down}, 
	\begin{equation}
	\P\left(\forall i\leq \dfrac{ T_{1/2}}{\gamma n^{1/8}}: \ Q_{i+1} \geq Q_i \right) = 1- o(n^{-1}),
	\end{equation}
and consequently
	\begin{equation}
	\P \left(\forall i \leq \dfrac{ T_{1/2}}{\gamma n^{1/8}}: \ Q_i \geq 2 \gamma n^{1/8}\right) = 1-o(n^{-1}).
	\end{equation}
Since $S_{t+1}^{\sss(v)} -S_t^{\sss(v)} \geq -2$, we know that $S_{t+s}^{\sss(v)} \geq S_t^{\sss(v)} - 2s$, so we conclude that 
	\begin{equation}
	\P (S_t^{\sss(v)} >0\ \forall t \leq T_{1/2} ) = 1 - o(n^{-1}).
	\end{equation}
This completes the proof of Proposition \ref{surv}.
\end{proof}
\bigskip

\noindent
We can now conclude the proof of Theorem \ref{geq3}.
\medskip

\noindent
{\it Proof of Theorem \ref{geq3}.} Proposition \ref{surv} proves \eqref{bd-1} in Theorem \ref{geq3}. To prove \eqref{bd-2} in Theorem \ref{geq3},
we use that if $|\cluster(v)|>n/2$, then $v\in \Cmax$, to bound
	\eqan{
	\E [\#\{ v \in [n] \setminus \Cmax \colon d_v \geq 3\}]
	&\leq \E [\#\{ v\in[n] \colon d_v \geq 3, |\cluster(v)|< n/2\}]=o(1)
	}
by Proposition \ref{surv}. \qed
\bigskip

\noindent
To show that actually the size of the graph without the giant component has bounded expectation we need a slightly stronger result.

\begin{prop}[Clusters of vertices of degree at least three outside $\Cmax$]
\label{out}
Let  $\CMd$ be in the connectivity critical window defined in Condition \ref{cond-crit-conn}. Then
	\begin{equation}
	\E [\# \{ v \notin \Cmax\colon v \leftrightarrow [n]\setminus (\Nscr_1\cup\Nscr_2)\}] \to 0,
	\end{equation}
where, for a set of vertices $A\subseteq [n]$, $v \leftrightarrow A$ denotes that there exists $a\in A$ such that $v$ and $a$ are in the same connected component.
\end{prop}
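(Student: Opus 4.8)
The plan is to extend the argument of Theorem~\ref{geq3} (and Proposition~\ref{surv}) from a single starting vertex of degree at least three to the event that a vertex $v$ outside $\Cmax$ is connected to \emph{some} vertex of degree at least three. The key observation is that if $v \leftrightarrow [n]\setminus(\Nscr_1\cup\Nscr_2)$, then the connected component $\cluster(v)$ contains a vertex $w$ with $d_w\geq 3$, and $w\notin\Cmax$; hence by $|\cluster(w)|=|\cluster(v)|$,
\begin{equation}
\E[\#\{v\notin\Cmax\colon v\leftrightarrow[n]\setminus(\Nscr_1\cup\Nscr_2)\}]
\leq \E\Big[\sum_{w\colon d_w\geq 3,\, w\notin\Cmax} |\cluster(w)|\Big]
= \sum_{w\colon d_w\geq 3}\E\big[|\cluster(w)|\,\indi_{\{w\notin\Cmax\}}\big],
\end{equation}
since a component containing a vertex of degree $\geq 3$ outside $\Cmax$ can be charged to that vertex, and each such component is charged at least once. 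So it suffices to show $\sum_{w\colon d_w\geq 3}\E[|\cluster(w)|\,\indi_{\{w\notin\Cmax\}}]\to 0$. Fix such a $w$ and run the exploration from $w$ as in Section~\ref{sec-conn-three}. On the event $\{w\notin\Cmax\}$ we have $T_0 < T_{1/2}$, i.e. the exploration dies before exhausting half the neutral half-edges, and $|\cluster(w)|\leq \ell_n$ deterministically. Thus $\E[|\cluster(w)|\,\indi_{\{w\notin\Cmax\}}]\leq \ell_n\,\P(\exists t\leq T_{1/2}\colon S_t^{\sss(w)}=0) = \ell_n\cdot o(n^{-1}) = o(1)$ by Proposition~\ref{surv}; but summing over the $\Theta(n)$ vertices $w$ with $d_w\geq 3$ only gives $o(n)\cdot o(n^{-1})$, which is not quite $o(1)$ — one needs the cluster-size weighting to be handled more carefully.

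To close this gap, I would prove the sharper bound $\E[|\cluster(w)|\,\indi_{\{w\notin\Cmax\}}] = o(n^{-1})$ for each fixed $w$ with $d_w\geq 3$. On $\{w\notin\Cmax\}$, either the exploration dies before time $n^{1/8}$, which by Lemma~\ref{1/8} has probability $o(n^{-1})$ and contributes at most $n^{1/8}\cdot o(n^{-1})=o(n^{-1})$ to the expectation; or it survives past $n^{1/8}$, in which case by Lemmas~\ref{many} and~\ref{down} it survives until $T_{1/2}$ with probability $1-o(n^{-1})$, contradicting $w\notin\Cmax$, so the surviving-but-dying case also has probability $o(n^{-1})$. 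On this event we still bound $|\cluster(w)|\leq\ell_n = O(n)$, giving contribution $O(n)\cdot o(n^{-1})$; this is only $O(1)$, not $o(n^{-1})$, so I would instead split according to the death time $T_0\in(n^{1/8},T_{1/2})$ and use that, given the process reaches level $2\gamma n^{1/8}$ at time $n^{1/8}$, the probability it subsequently dies is bounded by the probability it drops by $2\gamma n^{1/8}$, which decays faster than any polynomial by iterating Lemma~\ref{down} over the $O(n^{7/8})$ blocks; refining \eqref{bin} shows this probability is $\exp(-\Theta(n^{1/8}))$, hence $\E[|\cluster(w)|\,\indi_{\{w\notin\Cmax\}}]\leq \ell_n\cdot\exp(-\Theta(n^{1/8})) = o(n^{-1})$ uniformly in $w$. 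Summing over the at most $n$ choices of $w$ gives the claim.

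The main obstacle is precisely this quantitative strengthening: Proposition~\ref{surv} as stated gives only $o(n^{-1})$, which after multiplying by the deterministic cluster-size bound $\ell_n = O(n)$ and summing over $O(n)$ starting vertices is insufficient. The resolution is that the failure probability in the exploration argument is in fact stretched-exponentially small — this comes entirely from the concentration inequality \eqref{bin} for binomials, which is already $\e^{-cn^{1/8}}$, and from the union bound over $O(n)$ time blocks in Lemma~\ref{down}, which only costs a polynomial factor. Re-auditing Lemmas~\ref{1/8}, \ref{down6}, \ref{many}, and \ref{down} to track that the relevant bound is $\exp(-\Theta(n^{1/8}))$ rather than merely $o(n^{-1})$, and combining with the deterministic bound $|\cluster(w)|\leq\ell_n$, is the technical heart of the proof; everything else is the bookkeeping in the first paragraph above.
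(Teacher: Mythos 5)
Your opening reduction (charging every vertex of a non-giant component to a degree-$\geq 3$ vertex $w$ in it and bounding $\sum_{w:d_w\geq 3}\E[|\cluster(w)|\indi_{\{w\notin\Cmax\}}]$) is valid, but the way you then try to close the estimate has two genuine gaps. First, the early-death bookkeeping is wrong: $n^{1/8}\cdot o(n^{-1})$ is $o(n^{-7/8})$, not $o(n^{-1})$, and summing over the $\Theta(n)$ vertices $w$ with $d_w\geq 3$ gives $o(n^{1/8})$, which does not tend to $0$. Even inserting the sharper bound $\P(F_1)+\P(F_2)+\P(F_3)=O(n^{-9/8})$ from the proof of Lemma \ref{1/8}, the per-vertex contribution $n^{1/8}\cdot O(n^{-9/8})=O(n^{-1})$ only yields $O(1)$ after summation; to rescue this you would need something like $\E[T_0\,\indi_{\{T_0\leq n^{1/8}\}}]=o(n^{-1})$, i.e.\ a version of Lemma \ref{1/8} weighted by the death time, which you have not supplied. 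Second, the claim that ``the failure probability in the exploration argument is in fact stretched-exponentially small'' is false: when $\rho_1>0$, a vertex $w$ with $d_w=3$ lies outside $\Cmax$ already when its three half-edges all pair to degree-one vertices, an event of probability $\Theta\big((n_1/\ell_n)^3\big)=\Theta(n^{-3/2})$, which is polynomial. Only the late-death probability (dying after reaching level $2\gamma n^{1/8}$) could plausibly be $\exp(-\Theta(n^{1/8}))$, but the paper's Lemmas \ref{down6} and \ref{down} only deliver $o(n^{-2})$ per block (they require six bad events, not $\Theta(n^{1/8})$ of them), so ``iterating Lemma \ref{down}'' does not produce the bound you invoke; establishing it would be a new, nontrivial lemma.

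The paper sidesteps the cluster-size weighting entirely, and this is the idea your proposal is missing. Rather than charging the whole component to its degree-$\geq 3$ vertex, it sums over each low-degree vertex $v$ individually and runs the exploration started at $v$: on the event in question the exploration must meet a vertex of degree $\geq 3$ (and does so within $n^{1/8}$ steps up to probability $\e^{-cn^{1/8}}$), after which dying before $T_{1/2}$ again forces one of the events $F_1,F_2,F_3$ when $d_v=2$, giving probability $o(n^{-1})$ for each of the $O(n)$ degree-two vertices; when $d_v=1$ the analogous events $F_1',F_2'$ only give $o(n^{-1/2})$, but there are only $O(\sqrt n)$ such vertices, so the sum is still $o(1)$. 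This per-vertex decomposition makes the union bound close with the lemmas exactly as stated, with no quantitative strengthening of the exploration estimates.
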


\proof We have already proved that $\E [\#\{ v \in [n] \setminus \Cmax\colon d_v \geq 3\}] \to 0$.
We now initialize the exploration starting from a vertex $v$ with $d_v \in\{ 2,1\}$. Notice that the probability for the process to survive for $n^{1/8}$ steps without finding vertices of degree $3$ is smaller than $\e^{-cn^{1/8}}$ for some $c>0$, since at every step the probability to find a vertex $w$ with $ d_w\geq 3$ is bounded away from $0$.

\begin{itemize}
\item[$\rhd$] If $d_v =2$ and our exploration finds a vertex $w$ with $ d_w \geq 3$ before time $n^{1/8}$, then for the process to die out before time $n^{1/8}$, we again need one of the events $F_1, F_2, F_3$ to occur. Then we can apply Lemmas \ref{1/8}, \ref{many} and \ref{down} to complete the proof that $\E [\# \{ v \in \Nscr_2 \setminus \Cmax\colon v \leftrightarrow [n]\setminus (\Nscr_1\cup\Nscr_2)\}]\to 0$,  in the same way as in the proof of Proposition \ref{surv}. 
\item[$\rhd$] In the connectivity critical window, we have that $n_1 = O (\sqrt{n})$. If $d_v =1$ and our exploration at a certain point finds a vertex $w$ with $d_w \geq 3$, then for the process to die out before time $n^{1/8}$ we need one of the following two events to occur:
	\eqan{
	F'_1 &=\bigcup_{s_1,s_2 \leq n^{1/8}}A(s_1) \cap A(s_2)  \cap \{ S_{s_1}^{\sss(v)},S_{s_2}^{\sss(v)} \leq 2 \}, \\ 
	F'_2&=\bigcup_{s \leq n^{1/8}} B (s) \cap \{ S_{s}^{\sss(v)} =2 \}.\nonumber
	}
We estimate using \eqref{A(t)-bd} and \eqref{B(t)-bd} to obtain
	\begin{align}
	\P (F'_1) &\leq {{n^{1/8}}\choose{2}} \left(\dfrac{2\rho_1}{ \sqrt{n}}\right)^2 \leq 2\rho_1^2 \cdot \dfrac{n^{1/4}}{n}= o(n^{-1/2}), \\
	\P (F'_2) &\leq n^{1/8}\cdot \dfrac{4}{n}= o(n^{-1/2}).
	\end{align}
Now we can apply Lemmas \ref{many} and \ref{down} to complete the proof that $\E [\# \{ v \in \Nscr_1 \setminus \Cmax: v \leftrightarrow w; d_w \geq 3 \}]\to 0$ in a similar way as in the proof of Proposition \ref{surv}.
\end{itemize}
Since
	\eqan{
	&\E [\# \{ v \notin \Cmax\colon v \leftrightarrow [n]\setminus (\Nscr_1\cup\Nscr_2)\}]\\
	&\qquad= \E [\#\{ v \in [n] \setminus \Cmax \colon d_v \geq 3\}]\nn\\
	&\qquad\qquad+ \E [\# \{ v \in (\Nscr_1 \cup \Nscr_2) \setminus \Cmax\colon v \leftrightarrow [n]\setminus (\Nscr_1\cup\Nscr_2)\}],\nonumber
	}
we obtain the claim. 
\qed

\section{Proof of the Main Theorems }
\label{sec-compl-pf}

We can now finally prove the main theorems, putting together results from the previous two sections.

\begin{proof}[Proof of Theorem \ref{main}]
We know that
	\begin{equation}
	\{ \CMd \text{ is connected} \}= \{C_k(n)=L_k(n)=0 \ \forall k\} \cap \{[n]\setminus  (\Nscr_1 \cup \Nscr_2)\subseteq \Cmax\}.
	\end{equation}
We have proved in Theorem \ref{geq3} that, \whp, $[n]\setminus \Cmax\subseteq \Nscr_1 \cup \Nscr_2$. Thus,
	\eqn{
	\prob(\CMd \text{ is connected})=\prob(C_k(n)=L_k(n)=0 \ \forall k)+o(1).
	}
By Theorem \ref{poicl} and the independence of $C _k, L_k$, for each $j < \infty$,
	\begin{equation}
	\lim_{n \to \infty} \P  (C_k(n)=L_k(n)=0 \ \forall k \leq j) = \prod_{k=1}^{j} \P (C_k =0) \prod_{k=2}^{j} \P (L_k=0).
	\end{equation}
To pass to the limit we use dominated convergence. We compute that
	\begin{equation}
	\E [L_k(n)] = n_1 \dfrac{2 n_2}{\ell_n -1}\dfrac{2 n_2-2}{\ell_n -3}\cdots \dfrac{2 n_2 - 2k +4 }{\ell_n - 2k +3}\dfrac{n_1 -1}{\ell_n -2k+1} 
	\leq \dfrac{n_1^2 (2n_2)^{k-2}}{(\ell_n-2k)^{k-1}}.
	\end{equation}
Since $\dfrac{n_1}{\sqrt{n}}\to \rho_1$, $\dfrac{n_2}{n} \to p_2$ and $\dfrac{\ell_n}{n}\to d$, for each $\varepsilon$ and $n$ big enough such that
	\begin{equation}
	\label{exp-Lk-bd}
	\E [L_k(n)] \leq \dfrac{n_1^2(2 n_2)^{k-2}}{2(\ell_n-2k)^{k-1}} 
	\leq \dfrac{(\rho_1^2+\varepsilon)^2}{2(d-\varepsilon)} \left(\dfrac{2(p_2+\varepsilon)}{d-\varepsilon}\right)^{k-2}.
	\end{equation}
For $\varepsilon$ small enough $2(p_2+\varepsilon)< d-\varepsilon$ so the series on the right hand side of \eqref{exp-Lk-bd} is exponentially small in $k$.
Similarly for $C_k(n)$,
	\begin{equation}
	\E[C_k(n)] =\dfrac{1}{2k} n_2 \dfrac{2n_2 -2}{\ell_n-2}\cdots \dfrac{2n_2 -2k+4}{\ell_n-2k+4}\dfrac{1}{\ell_n-2k+2}\leq \dfrac{(2n_2)^k}{k(\ell_n-2k)^k}.
	\end{equation}
As before, we have for every $\varepsilon>0$ 
	\begin{equation}
	\label{exp-Ck-bd}
	\E [C_k(n)]\leq \dfrac{1}{k}\dfrac{(2n_2)^k}{(\ell_n-2k)^k} 
	\leq \dfrac{(2n_2)^k}{k(\ell_n-2k)^k} \leq \dfrac{(2p_2+2 \varepsilon)^k}{k(d-\varepsilon)^k}.
	\end{equation}
Again, for $\varepsilon>0$ small enough, $2(p_2+\varepsilon)< d-\varepsilon$, so that the series on the right hand side of \eqref{exp-Ck-bd} is exponentially small in $k$.

Since
	\[
	\{C_k(n)=L_k(n)=0 \ \forall k\} = \bigcap_j \{C_k(n)=L_k(n)=0 \ \forall k \leq j\}, 
	\]
we obtain
	\eqan{
	\lim_{n \to \infty} \P  (C_k(n)=L_k(n)=0 \ \forall k) &\leq\lim_{j \to \infty}\lim_{n \to \infty} \P  (C_k(n)=L_k(n)=0 \ \forall k \leq j) \\ 
	&=\prod_{k=1}^{\infty} \P (C_k =0) \prod_{k=2}^{\infty} \P (L_k=0)\nn\\
	&=\exp{\left(-\sum_{k=1}^{\infty}  \dfrac{(2p_2)^k}{2kd^k} -\sum_{k=2}^{\infty} \dfrac{\rho_1^{2} (2p_2)^{k-2}}{2d^{k-1}}\right)}\nn\\
	&= \left(\dfrac{d-2p_2}{d}\right)^{1/2}\exp{\left(-\dfrac{\rho_1^2}{2(d-2p_2)}\right)},\nn
	}
where we use that $-\sum_{k\geq 1} x^k/k=\log(1-x)$ for $x\geq 0$.

For the lower bound, we use
	\eqan{
	\lim_{n \to \infty} \P  (C_k(n)=L_k(n)=0 \ \forall k)  &\geq\lim_{j \to \infty}\lim_{n \to \infty} \P  (C_k(n)=L_k(n)=0 \ \forall k \leq j)\\ 
	&\qquad- \limsup_{j\rightarrow \infty}\limsup_{n\rightarrow \infty} \P( \exists k > j \colon C_k(n)+L_k(n) \geq 1).\nn
	}
We find, using the Markov inequality,
	\eqan{
	\label{truncation}
	&\limsup_{j\rightarrow \infty}\limsup_{n\rightarrow \infty} \P( \exists k > j \colon C_k(n)+L_k(n) \geq 1)\\
	&\qquad= \limsup_{j\rightarrow \infty}\limsup_{n\rightarrow \infty}  \P\Big( \sum_{k > j} ( C_k(n)+L_k(n)) \geq 1\Big)\nonumber\\
	&\qquad\leq \limsup_{j\rightarrow \infty}\limsup_{n\rightarrow \infty} \sum_{k > j} \E [ C_k(n)+L_k(n)] =0,\nonumber
	}
by \eqref{exp-Lk-bd} and \eqref{exp-Ck-bd}.
As a result,
	\begin{equation}
	\label{connfin}
	\lim_{n \to \infty} \P  (C_k(n)=L_k(n)=0 \ \forall k) 
	= \left(\dfrac{d-2p_2}{d}\right)^{1/2}\exp{\left(-\dfrac{\rho_1^2}{2(d-2p_2)}\right)}.
	\end{equation}
From \eqref{connfin}, we obtain \eqref{conn} using Theorem \ref{geq3}.
\medskip

We next investigate the boundary cases in Remark \ref{rem-bc}. The result in \eqref{bc-1} follows in an identical way as in the proof of Theorem \ref{main}.
For the result for $d=\infty$ in \eqref{bc-2}, we notice that if $\E [D_n] \to \infty$ then for, all $k \geq 1$,
	\begin{equation}
	\lim_{n\to \infty} \dfrac{2n_2}{\ell_n-2k} =0,
	\end{equation}
so that $\sum_{k\geq 3} L_k(n) +\sum_{k\geq 1} C_k(n)\convp 0$ by \eqref{exp-Lk-bd} and \eqref{exp-Ck-bd} and the Markov inequality.
Moreover,
	\begin{equation}
	\P (L_2 (n) = 0)=\prod_{i=1}^{n_1} \dfrac{\ell_n - n_1 -i +1}{\ell_n  -2i +1} =  \exp\Big(-\frac{n_1^2}{2 \ell_n} (1+o(1))\Big),
	\end{equation}
so that 
	\begin{equation}
	\lim_{n \to \infty} \P (\CMd \text{ is connected})= \lim_{n \to \infty} \P(L_2(n) =0) = \lim_{n \to \infty} \exp\Big(-\frac{n_1^2}{2 \ell_n}\Big).
	\end{equation}
\medskip

Further, we can explore the distribution of the number of vertices outside the giant. We write it as
	\begin{equation}
	n - |\Cmax | = \sum_{k=1}^\infty k (C_k(n)+L_k(n)) + \# \{ v \notin \Cmax: v \leftrightarrow [n]\setminus  (\Nscr_1 \cup \Nscr_2)\}.
	\end{equation}
From Proposition \ref{out} we know that $\E[\# \{ v \notin \Cmax: v \leftrightarrow  [n]\setminus  (\Nscr_1 \cup \Nscr_2)\}]\to 0$, so that, by the Markov inequality,
	\begin{equation}
	\begin{split}
	n - |\Cmax |&= \sum_{k\geq 1} k (C_k(n)+L_k(n))+\op(1).
	\end{split}
	\end{equation}
By \eqref{exp-Lk-bd} and \eqref{exp-Ck-bd} and dominated convergence, we obtain
	\begin{equation}
	n - |\Cmax |\convd  \sum_{k=1}^\infty k (C_k+L_k),
	\end{equation}
which completes the proof of \eqref{dist}.

Since we have shown convergence of all moments, we also obtain\footnote{In the published version of the article, equation \eqref{eq-exp} had a wrong value for the expected number of vertices in line components in the first version, which has now been corrected.}
	\eqan{\label{eq-exp}
	\lim_{n \to \infty} \E [n - |\Cmax |] 
	&= \lim_{n \to \infty} \sum_{k=1}^\infty k \E[ C_k(n)+L_k(n)]\\
	&=\lim_{j \to \infty} \sum_{k=1}^j k \dfrac{(2p_2)^k}{2kd^k}
	+\sum_{k=2}^j k \dfrac{\rho_1^2(2p_2)^{k-2}}{2d^{k-1}}
	=\dfrac{p_2}{d-2p_2}+\dfrac{\rho_1^2}{d-p_2},\nn
	}
as required.	
\end{proof}

\begin{proof}[Proof of Theorem \ref{mainsim}] If we condition on simplicity, then we already have that $C_1(n)=C_2(n)=0$. Therefore, we find using the same method 
as in the previous proof that
	\eqan{
	&\lim_{n \to \infty} \P  (C_k(n)=L_k(n)=0 \ \forall  k\mid \CMd \text{ is simple}) \\
	&\qquad= \prod_{k=3}^{\infty} \P (C_k =0) \prod_{k=2}^{\infty} \P (L_k=0) 
	=\exp{\left(-\sum_{k=3}^{\infty}  \dfrac{(2p_2)^k}{2kd^k}-\sum_{k=2}^{\infty} \dfrac{\rho_1^{2} (2p_2)^{k-2}}{2d^{k-1}} \right) }\nn\\
	&\qquad=\left(\dfrac{d-2p_2}{d}\right)^{1/2}\exp{\left(-	\dfrac{\rho_1^2}{2(d-2p_2)} +\dfrac{p_2^2+d p_2}{d^2} \right) },\nn
	}
from which we obtain \eqref{conns} thanks to Theorem \ref{geq3}.
\medskip

We recall that $\mathscr{N}_n(\boldsymbol{d})$ denotes the number of simple graphs with degree distribution $\boldsymbol{d}$. We know that
	\begin{equation}
	\mathscr{N}_n(\boldsymbol{d})=\exp\left(  -\dfrac{\nu}{2}- \dfrac{\nu^2}{4} \right) \dfrac{(\ell_n-1)!!}{\prod_{i \in [n]}d_i !}(1+o(1))
	\end{equation}
Since $\CMd$ conditioned on being simple has the uniform distribution over all possible simple graphs with degree sequence $\boldsymbol{d}$,
	\begin{equation}
	\mathscr{N}^C_n(\boldsymbol{d})=\mathscr{N}_n(\boldsymbol{d}) \P(\CMd  \text{ is connected}
	\mid \CMd \text{ is simple}),
	\end{equation}
which yields the claim.
\end{proof}

\section*{Acknowledgments}
The work of LF and RvdH is supported by the Netherlands Organisation for Scientific Research (NWO) through the Gravitation {\sc Networks} grant 024.002.003. The work of RvdH is also supported by the Netherlands Organisation for Scientific Research (NWO) through VICI grant 639.033.806. The authors would like to thank Prof. Onno Boxma for pointing out a computational mistake in \eqref{eq-exp} in the first version of the paper.

\begin{small}
\bibliographystyle{abbrv}

\end{small}

\end{document}